\theoremstyle{plain}
\newtheorem{thm}{Theorem}[section]
\newtheorem{lemma}[thm]{Lemma}
\newtheorem{prop}[thm]{Proposition}
\newtheorem{cor}[thm]{Corollary}
\theoremstyle{definition}
\newtheorem{defn}[thm]{Definition}
\theoremstyle{remark}
\newcommand{\nc}{\newcommand}
\def\makeop#1{\expandafter\def\csname#1\endcsname
  {\mathop{\rm #1}\nolimits}\ignorespaces}
\def\makebb#1{\expandafter\def
  \csname bb#1\endcsname{{\mathbb{#1}}}\ignorespaces}
\def\makebf#1{\expandafter\def\csname bf#1\endcsname{{\bf
      #1}}\ignorespaces} 
\def\makegr#1{\expandafter\def
  \csname gr#1\endcsname{{\mathfrak{#1}}}\ignorespaces}
\def\makescr#1{\expandafter\def
  \csname scr#1\endcsname{{\EuScript{#1}}}\ignorespaces}
\def\makecal#1{\expandafter\def\csname cal#1\endcsname{{\mathcal
      #1}}\ignorespaces} 
\def\doLetters#1{#1A #1B #1C #1D #1E #1F #1G #1H #1I #1J #1K #1L #1M
                 #1N #1O #1P #1Q #1R #1S #1T #1U #1V #1W #1X #1Y #1Z}
\def\doletters#1{#1a #1b #1c #1d #1e #1f #1g #1h #1i #1j #1k #1l #1m
                 #1n #1o #1p #1q #1r #1s #1t #1u #1v #1w #1x #1y #1z}
     \def\qed{\qedmark\medbreak}%
\def\qedmark{{\enspace\vrule height 6pt width 5pt depth 1.5pt}}%
\newcommand{\Z}{\mathbb Z}
\newcommand{\Q}{\mathbb Q}
\newcommand{\F}{\mathbb F}
\newcommand{\npr}{\noindent }
\newcommand{\isoto}{\stackrel{\sim}{\to}}
\nc{\embed}{\hookrightarrow}
\newcommand{\ch}{characteristic }
\nc{\ol}{\overline}
\nc{\wt}{\widetilde}
\nc{\opp}{\mathrm{opp}}
\begin{document}
\renewcommand{\thefootnote}{\fnsymbol{footnote}}
\setcounter{footnote}{-1}
\numberwithin{equation}{section}
%\numberwithin{section}{chapter}

%\usepackage[notref,notcite]{showkeys}

\title[Field embeddings]
  {Embeddings of fields into simple algebras: generalizations and
  applications}  
\author{Chia-Fu Yu}
\address{
Institute of Mathematics, Academia Sinica and NCTS (Taipei Office)\\
6th Floor, Astronomy Mathematics Building \\
No. 1, Roosevelt Rd. Sec. 4 \\ 
Taipei, Taiwan, 10617} 
\email{chiafu@math.sinica.edu.tw}

\date{\today}
\subjclass[2000]{16R20,11R52,16K}
\keywords{semi-simple algebras, embeddings, the local-global principle}

\def\d{{\rm d}}
\def\c{{\rm c}}
\def\i{{\rm i}}
\def\falg{\text{$F$-alg}}
\def\Mat{{\rm Mat}}
\def\Mod{{\rm Mod}}

\begin{abstract}
For two semi-simple algebras $A$ and $B$ over an arbitrary ground
field $F$, we give a numerical criterion when $\Hom_\falg(A,B)$, 
the set of $F$-algebra homomorphisms between them, is non-empty. 
We also determine when the orbit set $B^\times \backslash
\Hom_\falg(A,B)$ is finite and give an explicit 
formula for its cardinality. A few applications of main results are
given.

% We also give
% an explicit 
% formula for the number $|B^\times \backslash \Hom_\falg(A,B)|$ of
% equivalence classes of $F$-algebra homomorphisms from $A$ to $B$ if
% this orbit set is finite. We find the necessary and sufficient
% condition so that the set  $B^\times \backslash \Hom_\falg(A,B)$ is
% finite.
% As an application of our general results, we setup a method 
% for analyzing the problem of the local to global
% principle for the embedding of a field extension into a 
% central simple algebra over a global field.  
\end{abstract} 

\maketitle

%\tableofcontents   % Table of Contents

% unless the contrary is explicitly stated.
% unless otherwise stated.

\section{Introduction}
\label{sec:01}
\def\rad{\rm rad}
Semi-simple algebras are the most fundamental objects in
the non-commutative ring theory. They also serve as a useful tool
for studying some basic objects in algebraic geometry and number theory
such as abelian varieties, Drinfeld modules, or elliptic sheaves those
form a semi-simple category. 
While studying objects $X$ as above with extra symmetries, one
considers objects $X$ together with an additional structure 
$\iota:A\to \End(X)\otimes \Q$ on $X$ by a semi-simple algebra $A$. 
This leads to a general question when 
a homomorphism exists from a given semi-simple algebra to another one.
% structures on homomorphisms 
% between two semi-simple algebras. 

We let $F$ denote the ground field. 
Suppose $A$ is a (finite-dimensional) central simple algebra over 
$F$, and $K$ a finite field extension of $F$. 
In what condition does there exist an $F$-algebra embedding 
of $K$ into $A$? 
The following well-known result answers % gives an answer to 
this question; see \cite[13.3 Theorem and Corollary, p.~241]{pierce}.

\begin{thm}\label{11}
  Assume that $[K:F]=\sqrt{[A:F]}$. Then there is 
  an embedding of $K$ into $A$ as $F$-algebras if and only if $K$
  splits $A$, i.e. $A\otimes_F K$ is a matrix algebra over $K$. 
  In this case, $K$ is isomorphic to a strictly maximal subfield of $A$.
\end{thm}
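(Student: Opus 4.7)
The plan is to argue by dimension-counting with the help of the double-centralizer theorem for central simple algebras. Write $n = [K:F]$, so $[A:F] = n^2$; by Wedderburn, $A \cong M_r(D)$ for some division $F$-algebra $D$ of degree $d$, and $rd = n$.

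For the ``only if'' direction, suppose we are given an $F$-embedding $K \hookrightarrow A$. The double-centralizer theorem gives $[C_A(K):F]\cdot[K:F] = [A:F] = n^2$, whence $[C_A(K):F] = n = [K:F]$; since $K \subseteq C_A(K)$ and both have the same $F$-dimension, $C_A(K) = K$, which in particular says $K$ is strictly maximal. To produce the splitting, I would view $A$ as a right $K$-module of rank $n$ via the embedding, and define the $F$-algebra map
\[
\Phi\colon A\otimes_F K \longrightarrow \End_K(A),\qquad a\otimes k \longmapsto \bigl(x\mapsto axk\bigr).
\]
The map is well defined (left multiplication by $A$ and right multiplication by $K$ commute) and $K$-linear with respect to the $K$-structures on both sides. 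Since $A\otimes_F K$ is central simple over $K$, $\Phi$ is automatically injective; as both sides have $K$-dimension $n^2$, it is an isomorphism, and $\End_K(A)\cong M_n(K)$ exhibits the splitting $A\otimes_F K \cong M_n(K)$.

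For the ``if'' direction, suppose $A\otimes_F K \cong M_n(K)$, and let $V$ be the unique simple left module over $M_n(K)$, so $V\cong K^n$ and $\dim_F V = n^2$. Pulling back along $A\hookrightarrow A\otimes_F K$, regard $V$ as a left $A$-module; simultaneously, the action of $1\otimes K$ gives a commuting left $K$-action on $V$. As a left $A$-module, $V$ is semisimple with simple summands isomorphic to $D^r$ (of $F$-dimension $rd^2 = nd$), so $V$ must be the direct sum of $r$ copies of $D^r$, which is precisely the left regular $A$-module $A \cong (D^r)^{\oplus r}$. Fixing any $A$-linear isomorphism $\psi\colon A\isoto V$, transport the $K$-action along $\psi^{-1}$: this produces a ring homomorphism
\[
K \longrightarrow \End_A(A) \cong A^{\opp}
\]
that is injective because $V$ is a faithful $A\otimes_F K$-module, hence faithful over $K$. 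Since $K$ is a field, $K\cong K^{\opp}$, and composing gives an $F$-algebra embedding $K\hookrightarrow A$. The ``In this case'' clause follows from the centralizer computation already performed in the forward direction.

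I expect the delicate step to be the ``if'' direction, specifically the identification of $V$ with the left regular $A$-module: one has to invoke Wedderburn's structure theorem to match the isotypic multiplicity on both sides, and then convert the $K$-action into an embedding of $K$ into $\End_A(A)\cong A^{\opp}$ rather than into $A$ directly. The commutativity of $K$ is what rescues us at the end, converting the embedding into $A^{\opp}$ into one into $A$.
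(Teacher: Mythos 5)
The paper itself gives no proof of Theorem~\ref{11}; it simply cites Pierce \cite[13.3 Theorem and Corollary, p.~241]{pierce}, so there is no in-paper argument to compare against. Your proof is correct and amounts to the standard double-centralizer / module-theoretic argument for strictly maximal subfields.

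Two small remarks. In the ``only if'' direction, the injectivity of $\Phi$ (from simplicity of $A\otimes_F K$) together with the equality $\dim_K(A\otimes_F K)=n^2=\dim_K\End_K(A)$ already yields the splitting without first establishing $C_A(K)=K$; the double-centralizer computation is only needed for the final ``strictly maximal'' clause. In the ``if'' direction, the detour through $A^{\opp}$ can be avoided by running the same argument with $A^{\opp}\otimes_F K$ in place of $A\otimes_F K$: since $K$ is commutative, $A^{\opp}\otimes_F K\cong(A\otimes_F K)^{\opp}\cong M_n(K)^{\opp}\cong M_n(K)$, its simple module is a right $A$-module of $F$-dimension $n^2$, hence isomorphic to the right regular module $A_A$, and the commuting $K$-action lands in $\End_A(A_A)\cong A$ directly. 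Your conversion from an embedding $K\to A^{\opp}$ to one $K\to A$ via commutativity of $K$ is nonetheless valid (the same set-map is a homomorphism into $A$ precisely because $K$ is commutative), so this is a point of style rather than a gap.
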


Note that any $F$-algebra homomorphism from $K$ to $A$ is 
an embedding.
%as $K$ is a simple algebra and the identity $1_K$ is mapped
%to the identity $1_B$. 
In this paper we consider the following general problem of embedding
of a semi-simple algebra into another one: \\

\npr {\bf (P1) } Let $A$ and $B$ be two semi-simple $F$-algebras.
Find a necessary and sufficient condition for them such that there is
an embedding or a homomorphism of $F$-algebras from $A$ into $B$. \\

Let $\Hom_{\text{$F$-alg}}(A,B)$ denote the set of all $F$-algebra
homomorphisms from $A$ into $B$,  and let $\Hom^*_{\falg}(A,B)\subset
\Hom_{\text{$F$-alg}}(A,B)$ 
denote the subset consisting of embeddings.
Problem {\bf (P1)} asks when the set $\Hom_{\falg}(A,B)$ or
$\Hom_{\falg}^*(A,B)$ is non-empty. For an $F$-algebra $A$, denote by
$\rad(A)$ the Jacobson radical of $A$ and $A^{ss}:=A/\rad(A)$ the
maximal semi-simple quotient of $A$. Denote by $A^{\rm o}$ the
opposite algebra of $A$. If $A$ is semi-simple, then one can write
\[ A\simeq \prod_{i=1}^s \Mat_{n_i}(D_i), \]
where $D_i$ are division algebras over $F$, which is also called the
Wedderburn decomposition of $A$. 

One of our main theorems is the following, 
which solves Problem {\bf (P1)}.

\begin{thm}[Theorem~\ref{27}]\label{12} 
Let $A$ and $B$ be two semi-simple $F$-algebras.
  We realize $B$ as $\prod_{j=1}^r \End_{\Delta_j}(V_j)$, where
  $\Delta_j$ is a division $F$-algebra and $V_j$ is a right
  $\Delta_j$-module for each $j$. Write $A=\prod_{i=1}^s A_i$ into the
  product of simple $F$-algebras.  
\begin{enumerate}
  \item For each $j$, write the maximal semi-simple quotient 
\[ (\Delta_j\otimes_F A^{\rm o})^{ss}=\prod_{k=1}^{t_j}
\Mat_{m_{jk}}(D_{jk}) \]
of $\Delta_j\otimes_F A^{\rm o}$ as the product of simple factors (the
Wedderburn decomposition). 
Then the set $\Hom_{\falg}(A,B)$ 
  is non-empty if and only if there are
  non-negative integers $x_{jk}$ for $j=1,\dots, r$ and $k=1,\dots, t_j$
  such that
  \begin{enumerate}
  \item $\sum_{k=1}^{t_j}x_{jk}=\dim_{\Delta_j} V_j$ for all $j$, and
  \item for all $j, k$, one has
 \[ \frac{m_{jk}[D_{jk}:F]}{[\Delta_j:F]} {\Big |} \, x_{jk}. \]
  \end{enumerate}

\item For each $j, i$, write the maximal semi-simple quotient 
\[ (\Delta_j\otimes_F A_i^{\rm o})^{ss}=\prod_{k=1}^{t_{ji}}
\Mat_{m_{jik}}(D_{jik}) \]
of $\Delta_j\otimes_F A_i^{\rm o}$ as the product of simple factors.
Then the set $\Hom^*_{\falg}(A,B)$ 
  is non-empty if and only if there are
  non-negative integers $x_{jik}$ for $j=1,\dots, r$, $i=1,\dots, s$ and 
  $k=1,\dots, t_{ji}$
  such that
  \begin{enumerate}
  \item $\sum_{i,k}x_{jik}=\dim_{\Delta_j} V_j$ for all $j$, 
  \item for all $j, i, k$, one has
 \[ \frac{m_{jik}[D_{jik}:F]}{[\Delta_j:F]} {\Big |}\, x_{jik}, \quad
 \text{and}\] 
   \item for all $i$, the sum $\sum_{j,k} x_{jik}$ is positive. 
  \end{enumerate}
%   one also has the further property that each direct sum 
%   $\oplus_{j=1}^r V_{ji}$ is non-zero for $i=1,\dots,s$. 
\end{enumerate}
\end{thm}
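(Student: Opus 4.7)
The plan is to translate an $F$-algebra homomorphism $\phi_j \colon A \to \End_{\Delta_j}(V_j)$ into a right module structure over $R_j := \Delta_j \otimes_F A^{\rm o}$ on $V_j$ extending its $\Delta_j$-action, and then to argue by composition series over $R_j$, whose simple factors are dictated by $R_j^{ss}$.

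Since $B = \prod_j \End_{\Delta_j}(V_j)$, a homomorphism $A \to B$ is a tuple $(\phi_j)_j$, so $\Hom_\falg(A,B)$ is non-empty iff each $\Hom_\falg(A,\End_{\Delta_j}(V_j))$ is. An element $\phi_j$ is the same datum as a left $A$-action on $V_j$ commuting with its given right $\Delta_j$-action, i.e.\ a right $R_j$-module structure on $V_j$ whose restriction along $\Delta_j \hookrightarrow R_j$, $d \mapsto d \otimes 1$, recovers that $\Delta_j$-action. Since right $\Delta_j$-modules of equal dimension are isomorphic, this compatibility is automatic once we produce any right $R_j$-module of $\Delta_j$-dimension $\dim_{\Delta_j} V_j$: one transports the $R_j$-structure along a $\Delta_j$-linear bijection. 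Writing $R_j^{ss} = \prod_k \Mat_{m_{jk}}(D_{jk})$, the simple right $R_j$-modules are the unique simples $S_{jk}$ of each factor, with $\dim_F S_{jk} = m_{jk}[D_{jk}:F]$; the induced map $\Delta_j \to \Mat_{m_{jk}}(D_{jk})$ is nonzero (else $S_{jk}=0$), hence injective by simplicity of $\Delta_j$, so $d_{jk} := m_{jk}[D_{jk}:F]/[\Delta_j:F]$ is a positive integer.

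For (1), necessity: a finite-length right $R_j$-module admits a composition series whose factors are $S_{jk}$'s with multiplicities $y_{jk}$ (using that $\rad(R_j)$ annihilates simple modules), so additivity of $\dim_{\Delta_j}$ along the filtration yields $\dim_{\Delta_j} V_j = \sum_k y_{jk} d_{jk}$, and $x_{jk} := y_{jk} d_{jk}$ satisfies (a) and (b). Sufficiency: given such $x_{jk}$, set $y_{jk} := x_{jk}/d_{jk} \in \Z_{\ge 0}$ and form $\bigoplus_k S_{jk}^{y_{jk}}$, a right $R_j^{ss}$-module of $\Delta_j$-dimension $\dim_{\Delta_j} V_j$, which (by the previous paragraph) yields the required $\phi_j$. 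For (2), the decomposition $A^{\rm o} = \prod_i A_i^{\rm o}$ gives $R_j = \prod_i R_{ji}$ with $R_{ji} := \Delta_j \otimes_F A_i^{\rm o}$; every simple right $R_j$-module is a simple $R_{ji}$-module for a unique $i$, so the index $k$ of (1) refines to $(i,k)$ with $k=1,\dots,t_{ji}$, yielding (a) and (b) of (2). Finally $\phi=(\phi_j)_j$ is an embedding iff no factor $A_i$ lies in $\ker \phi$, iff $A_i$ acts nontrivially on some $V_j$, iff some $y_{jik}$ (equivalently $x_{jik}$) is positive for that $i$, which is (c).

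The only genuinely technical point is the additivity of $\dim_{\Delta_j}$ along a composition series over $R_j$: each step of the filtration is a $\Delta_j$-submodule via $\Delta_j \hookrightarrow R_j$, so the successive quotients inherit $\Delta_j$-module structures and are free, making dimension additive. Everything else is a careful translation between right modules over the Wedderburn factors of $R_j^{ss}$ and the combinatorial data $(x_{jk})$ or $(x_{jik})$, where the main book-keeping subtlety is the appearance of $A^{\rm o}$ (rather than $A$) forced by the convention that $\End_{\Delta_j}(V_j)$ acts on $V_j$ on the left while $\Delta_j$ acts on the right.
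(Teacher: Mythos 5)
Your proof is correct. The high-level strategy is the same as the paper's: translate a homomorphism $A \to \End_{\Delta_j}(V_j)$ into a right $\Delta_j \otimes_F A^{\rm o}$-module structure on $V_j$, and then count dimensions. But you take a genuinely different route through the technical details, and it is worth spelling out the contrast.

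The paper's proof first establishes a structure theorem (its Proposition 2.4) asserting $\Delta_j \otimes_F A^{\rm o} \cong \prod_k \Mat_{m_{jk}}(\wt D_{jk})$ with $\wt D_{jk}$ Artinian and $(\wt D_{jk})^{ss} = D_{jk}$; it then reduces to a lemma (its Lemma 2.5) on which $F$-vector spaces are $\wt D$-modulable, the proof of which uses the radical filtration of a $\wt D$-module. You bypass Proposition 2.4 entirely: you argue directly from a composition series of $V_j$ as an $R_j = \Delta_j \otimes_F A^{\rm o}$-module, observing that the composition factors are precisely the simple modules $S_{jk}$ attached to the Wedderburn factors of $R_j^{ss}$, each free of $\Delta_j$-rank $d_{jk} = m_{jk}[D_{jk}:F]/[\Delta_j:F]$. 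This is cleaner for the existence question. (The paper uses Proposition 2.4 again in Section 3 to parametrize the orbit set, so it has an independent reason to set up that machinery; for Theorem 2.7 alone it is not essential, as your proof shows.) You also make explicit two small points the paper leaves implicit: that $d_{jk}$ is an integer because the composite $\Delta_j \to \Mat_{m_{jk}}(D_{jk})$ is nonzero, hence injective, on the division algebra $\Delta_j$; and that compatibility of the $R_j$-module structure with the pre-existing $\Delta_j$-action is automatic by transport along any $\Delta_j$-linear bijection, since right $\Delta_j$-modules of equal dimension are isomorphic. Both of these are needed and your treatment of them is correct.

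Two tiny remarks, neither of which is a gap. First, your sufficiency construction produces an $R_j^{ss}$-module and inflates it to $R_j$; it is worth saying in one clause that inflation is what makes $\bigoplus_k S_{jk}^{y_{jk}}$ a right $R_j$-module. Second, in part (2) the simple right $R_j$-modules decompose along $R_j = \prod_i R_{ji}$ because idempotents of the center of a product algebra cut every module, which you invoke correctly but could state as the reason.
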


% For an arbitrary $F$-algebra $A$, an $F$-vector space $V$ 
% is said to be {\it $A$-modulable} if there is an $A$-module structure
% on $V$ (Definition~\ref{21}). 
% It is clear that Theorem~\ref{11} is an immediate consequence 
% of a special case of Theorem~\ref{12}.
 
%See Subsection~\ref{sec:21} for the definition of $A$-modulable vector
%spaces. 
% According to Theorem~\ref{11}, we need to determine when an $F$-vector
% space is $A_1\otimes_F A_2$-modulable where $A_1$ and $A_2$ are
% simple-simple $F$-algebras. We shall give a simple numerical 
% criterion for such a question (Proposition~\ref{26}). 

% an $F$-vector
% space to be $A$-modulable for some special form of the $F$-algebra $A$ 
%has a special form 
%$A_1\otimes A_2$, where each $A_i$ is a semi-simple $F$-algebra.

% is 
% $A$-modulable; see Lemma~\ref{22}. 

Theorem~\ref{12} provides a numerical criterion for the problem
 {\bf (P1)}. However, it still looks technical.
 In the special case when $B$ is a central simple algebra
 over $F$, the criterion in Theorem~\ref{12} can be simplified
 as follows (This form will be used for applications):

\begin{thm}[Theorem~\ref{29}]\label{125}
  Let $B=\Mat_n(\Delta)$ be a central simple algebra of $F$, where
  $\Delta$ is the division part of $B$. Let $A=\prod_{i=1}^s A_i$ be a
  semi-simple $F$-algebra and let $K_i$ be the center of $A_i$ for each
  $i$. Then there is an
  embedding from the $F$-algebra $A$ into $B$ if and only if there
  are positive integers $n_i$ for $i=1,\dots, s$ such that

\begin{equation}\label{eq:105}
  n=\sum_{i=1}^s n_i, \quad \text{and}
  \quad [A_i:F]\,\mid\, n_i c_i, \quad
  \forall\, i=1,\dots, s,
\end{equation}
where $c_i$ the capacity of the central
simple algebra $\Delta\otimes_F {A_i}^{\rm o}$ over $K_i$.
\end{thm}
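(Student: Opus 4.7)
The plan is to derive the theorem as a direct specialization of Theorem~\ref{12}(2). Since $B = \Mat_n(\Delta) \cong \End_\Delta(\Delta^n)$, in the notation of that theorem we have $r=1$, $\Delta_1 = \Delta$, and $\dim_{\Delta_1} V_1 = n$, so there is only one index $j$ to keep track of.

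The key simplification will come from the fact that $\Delta$ is central simple over $F$. For each simple factor $A_i$ of $A$ with center $K_i$, I would rewrite
\[ \Delta \otimes_F A_i^{\rm o} \;\simeq\; (\Delta \otimes_F K_i) \otimes_{K_i} A_i^{\rm o}, \]
a tensor product over $K_i$ of two central simple $K_i$-algebras, hence itself central simple over $K_i$. Thus its maximal semi-simple quotient equals the algebra itself and has a single Wedderburn factor. Writing this factor as $\Mat_{c_i}(D_i)$ exhibits $c_i$ as the capacity of $\Delta \otimes_F A_i^{\rm o}$ over $K_i$. So in the notation of Theorem~\ref{12}(2), $t_{1i} = 1$, $m_{1i1} = c_i$, and $D_{1i1} = D_i$. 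Setting $n_i := x_{1i1}$, conditions (a) and (c) of Theorem~\ref{12}(2) then read $\sum_i n_i = n$ and $n_i > 0$, matching the statement.

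It remains to translate the divisibility condition (b). Comparing $F$-dimensions in $\Delta \otimes_F A_i^{\rm o} \simeq \Mat_{c_i}(D_i)$ yields $[\Delta:F]\cdot[A_i:F] = c_i^2\,[D_i:F]$, hence
\[ \frac{m_{1i1}[D_{1i1}:F]}{[\Delta_1:F]} \;=\; \frac{c_i\,[D_i:F]}{[\Delta:F]} \;=\; \frac{[A_i:F]}{c_i}. \]
Therefore (b) becomes $[A_i:F] \mid n_i c_i$, exactly the second condition in (\ref{eq:105}). There is no substantive obstacle in this deduction: all the work lies in Theorem~\ref{12}, and the present statement is a transparent reformulation under the extra hypothesis that $B$ is central simple, using only the classical fact that the tensor product of central simple algebras over a common center is again central simple.
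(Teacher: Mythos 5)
Your proof is correct and follows essentially the same route as the paper: both specialize Theorem~\ref{27} (= Theorem~\ref{12}) to $B=\Mat_n(\Delta)$, observe that $\Delta\otimes_F A_i^{\rm o}=(\Delta\otimes_F K_i)\otimes_{K_i}A_i^{\rm o}$ is central simple over $K_i$ and hence has a single Wedderburn factor $\Mat_{c_i}(D_i)$, use the dimension identity $[\Delta:F][A_i:F]=c_i^2[D_i:F]$, and translate the divisibility condition into $[A_i:F]\mid n_i c_i$. The only cosmetic difference is that you spell out the index bookkeeping ($r=1$, $t_{1i}=1$, etc.) more explicitly than the paper does.
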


Recall (\cite[p.~179]{reiner:mo}, also see Definition~\ref{28}) 
that the {\it capacity} 
 of a central simple algebra $A$ is the number 
$c$ so that $A\simeq \Mat_c(D)$, where $D$ is a division algebra,
also called the division part of $A$. 

The problem {\bf (P1)} does not seem to be studied in this generality
in the  literature. The only related result we know was obtained by 
Chuard-Koulmann and Morales \cite{chuard-koulmann-morales}, 
who studied, among other
things, the embeddability of a Frobenius algebra $A$ into a central
simple algebra $B$ with the condition $[A:F]=\deg(B)$. 
The overlap with \cite{chuard-koulmann-morales} 
is a consequence (Corollary~\ref{210}) of Theorem~\ref{125} 
where $A$ is a commutative etale $F$-algebra and $[A:F]=\deg(B)$ (and
$B$ is still central simple). Two approaches of studying embeddings of
algebras are different. As we also need to deal with 
non-separable algebras, the method of
 Galois cohomology is not applicable in our
situation. \\ 

%  (since we also deal non-separable algebras). \\ 

The second part of this paper studies how many ``essentially
different'' $F$-algebra homomorphisms there are from $A$ into
$B$. 
There are two natural notions of equivalence relations:

%  and for them we
% distinguish by the terms {\it equivalent} and {\it weakly
% equivalent}: 

\begin{enumerate}
\item Two $F$-algebra homomorphisms $\varphi_1, \varphi_2:A\to B$ are
  said to be {\it equivalent} if there is an element $b\in B^\times$
  such 
  that $\varphi_2=\Int(b)\circ \varphi_1$. That is, 
  $\varphi_2(a)=b\, \varphi_1(a) \,b^{-1}$ for all $a\in A$.
\item Two $F$-algebra homomorphism $\varphi_1, \varphi_2:A\to B$ are
  said to be {\it weakly equivalent} if there is an $F$-automorphism 
  $\alpha\in \Aut_{F}(B)$ of $B$  such
  that $\varphi_2=\alpha \circ \varphi_1$.  
\end{enumerate}

So the problem simply becomes asking for the size of the orbit set 
$B^\times \backslash \Hom_{\falg}(A,B)$ or the orbit set 
$\Aut_F(B)\backslash \Hom_{\falg}(A,B)$, where the groups 
$B^\times$ and $\Aut_F(B)$ act naturally on the set
$\Hom_{\falg}(A,B)$ from the left. 
In this paper we consider the equivalence relation defined
in (1). 
% A few reasons for us to make this consideration only.  
% The orbit set 
% $B^\times \backslash \Hom_{\falg}(A,B)$ 
% (resp. $B^\times \backslash \Hom^*_{\falg}(A,B)$) 
% represnts the set of equivalence classes of all
% $F$-algebra homomorphisms (resp. embeddings) from $A$ into $B$. 
% One easily sees the following points:
%\begin{itemize}
% \item The set 
%    $\Hom_{\falg}(A,B)$ (resp. 
%    $\Hom^*_{\falg}(A,B)$) 
%     is non-empty if and only if the orbit set 
%     $B^\times \backslash \Hom_{\falg}(A,B)$ 
%    (resp. $B^\times \backslash \Hom^*_{\falg}(A,B)$) is non-empty.   
% \item If the orbit set $B^\times \backslash \Hom_{\falg}(A,B)$ is
%   finite, then so is the orbit set\\   
%   $\Aut_F(B) \backslash \Hom_{\falg}(A,B)$.
% \item The Noether-Skolem Theorem states that if $B$ is central simple over
%   $F$ and $A$ is a simple $F$-subalgebra of $B$, then 
%   $|B^\times \backslash \Hom_{\falg}(A,B)|=1$.
% \end{itemize}
Put
\[ \calO_{A,B}:=B^\times \backslash \Hom_{\falg}(A,B). \]

% One naturally asks the following problem: \\

\npr {\bf (P2) } Is the orbit set 
$\calO_{A,B}$ finite?
%B^\times \backslash
%\Hom_{\falg}(A,B)$ finite? 
If so, then what is its cardinality  
$|\calO_{A,B}|$? \\
% B^\times \backslash \Hom_{\falg}(A,B)|$? \\

Results toward this direction may be viewed as the generalization of
the Noether-Skolem theorem. 
In \cite{pop:ns} F.~Pop and H.~Pop showed that the orbit set
$\calO_{A,B}$ is finite when $B$ is separable over $F$. 
Recall \cite[p.~101]{reiner:mo} that an $F$-algebra is said to be
  {\it separable over $F$} if it is semi-simple and its center $Z$ is
  etale over $F$, i.e. $Z$ is a finite product of finite separable
  field extensions of $F$. %They only gave an
  %upper bound for the number $|\calO_{A,B}|$ 
  %in this case. % The precise number remains to be answered.
  % not known even under this assumption. 
  The second main result of this paper solves the problem 
  {\bf (P2)} completely. The answer turns out to be negative in
  general; the finiteness of the orbit set is encoded in the K\"ahler
  modules of the center of $A\otimes_F B$. 
   We give a necessary and sufficient condition
  for $A$ and $B$ so that the orbit set $\calO_{A,B}$ is
  finite. 
  In addition, we compute the precise number of
  $|\calO_{A,B}|$ when $A$ or $B$ is separable over $F$ (a upper bound
  for $|\calO_{A,B}|$ is given in \cite{pop:ns} when $B$ is separable). 
  
% It turns out that the situation is subtle:  \\

% \begin{center}
% {\it The set $\calO_{A,B}$ is infinite in general. The finiteness of 
%   $\calO_{A,B}$ is encoded in the information of 
%   the K\"ahler modules of the centers
%   $Z(A)$ and $Z(B)$ over $F$.}   
% \end{center} \

We now state the explicit result for $\calO_{A,B}$. We may assume that
$B$ is 
simple; indeed if $B=\prod_{j=1}^r B_j$ is semi-simple, where $B_j$'s
are simple factors, then one has 
$\calO_{A,B}=\prod_{j=1}^r \calO_{A, B_j}$. 
Write $B=\Mat_n(\Delta)$, where $\Delta$ is a division
algebra over $F$. Write the maximal semi-simple quotient of 
$\Delta\otimes_F A^{\rm o}$ into the product of simple factors:  
\begin{equation}
  \label{eq:11}
  (\Delta\otimes_F A^{\rm o})^{ss}\simeq \prod_{i=1}^t \Mat_{m_i}(D_i),
\end{equation}
where $D_i$ is a division algebra. We show (Proposition~\ref{24}) that 
the algebra $\Delta\otimes_F A^{\rm o}$ has the following form
\begin{equation}
  \label{eq:12}
  \Delta\otimes_F A^{\rm o}\simeq \prod_{i=1}^t \Mat_{m_i}(\wt D_i),
\end{equation}
where $\wt D_i$ is an Artinian $F$-algebra whose 
maximal semi-simple quotient is equal to $D_i$. 
Put  $ R_i:=Z(\wt D_i)$ and $Z_i:=Z(D_i)=(R_i)^{ss}$. 
Let $\grm_{R_i}$ be the maximal ideal of $R_i$; one has
$Z_i=R_i/\grm_{R_i}$.   
Since there is an embedding from $\Delta$ into $\Mat_{m_i}(D_i)$, one
has  
$[\Delta:F]\,\mid\, m_i [D_i:F]$. Put
\begin{equation}
  \label{eq:13}
  \ell_i:=m_i [D_i:F]/[\Delta:F]\in \bbN,
\end{equation}
and 
\begin{equation}
  \label{eq:14}
  P(A,B):=\{(x_1,\dots, x_t)\in \Z^t_{\ge 0}\,\mid\, \dim_\Delta
V=\sum_{i=1}^t \ell_i x_i\, \}.
\end{equation}

\begin{thm}[Theorem~\ref{36}]\label{13} Let $A$ be a semi-simple
  $F$-algebra and $B=\Mat_n(\Delta)$ a 
  simple $F$-algebra.
  Let $\wt D_i$, $D_i$, $R_i$, $Z_i$ and $P(A,B)$ be as above. 
  
\begin{enumerate}
\item The orbit set $\calO_{A,B}$ is infinite if and only if there is
  an element $(x_1,\dots, x_t)\in P(A,B)$ such that 
  \begin{equation}
    \label{eq:15}
    \dim_{Z_i} \grm_{R_i}/\grm_{R_i}^2\ge 2\quad\text{and}\quad x_i\ge
    2  
  \end{equation}
for some $i\in \{1,\dots, t\}$.
\item Suppose the orbit set $\calO_{A,B}$ is finite, that is, for
  every element $(x_1,\dots, x_t)\in P(A,B)$, one has either
  $\dim_{Z_i} \grm_{R_i}/\grm_{R_i}^2\le 1$ or $x_i\le 1$ for all
  $i\in \{1,\dots, t\}$.
  Then we have the formula
 %(cf. (\ref{eq:39}))
  \begin{equation*}
    %\label{eq:39}
   |\calO_{A,B}|=\sum_{(x_1,\dots, x_t)\in P(A,B)} \prod_{i=1}^t
|\Mod(\wt D_i, x_i)|,  
  \end{equation*} 
where $\Mod(\wt D_i,x_i)$ is the set of isomorphism classes of 
$\wt D_i$-modules $W$ with $\length W=x_i$. 
The cardinality $|\Mod(\wt D_i, x_i)|$ is given by the following
formula:
\begin{equation}
  \label{eq:16}
  |\Mod(\wt D_i, x_i)|=
  \begin{cases}
    1 & \text{if $x_i\le 1$}, \\
    p(x_i,e_i) & \text{if $x_i>1$ and $\dim_{Z_i}
    \grm_{R_i}/\grm_{R_i}^2=1$}
%    \wt D_i\simeq
%    D[\epsilon]/(\epsilon^{e_i})$ for some $e_i\in \bbN$}, 
  \end{cases}
\end{equation}
where $e_i$ is the smallest positive integer such that
$\grm_{R_i}^{e_i}=0$, and 
$p(x,e)$ denotes of number of all partitions $x=c_1+\dots +c_s$
of $x$ with each part $c_i\le e$. 
\end{enumerate}
\end{thm}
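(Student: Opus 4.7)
The plan is to reinterpret $\Hom_{\falg}(A,B)$ as a set of module structures, decompose using the Wedderburn-type factorization (\ref{eq:12}) together with Morita equivalence, and finally classify modules over each local factor $\tilde D_i$. Giving an $F$-algebra homomorphism $\varphi : A \to B = \End_\Delta(V)$ is the same as equipping $V$ with a left $A$-action commuting with its given right $\Delta$-action, i.e.\ giving $V$ the structure of a right $\Delta \otimes_F A^{\mathrm{o}}$-module. Since $B^\times = \Aut_\Delta(V)$ acts on $\Hom_{\falg}(A,B)$ by post-composing with change of basis in $V$, two homomorphisms $\varphi_1, \varphi_2$ lie in the same $B^\times$-orbit precisely when the associated right $\Delta \otimes_F A^{\mathrm{o}}$-module structures on $V$ are isomorphic. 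Thus $\calO_{A,B}$ is in bijection with the set of isomorphism classes of right $\Delta\otimes_F A^{\mathrm{o}}$-modules $W$ with $\dim_\Delta W = n$.

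Using (\ref{eq:12}), any such $W$ decomposes as $W = \bigoplus_i W_i^{\mathrm{Mat}}$ with $W_i^{\mathrm{Mat}}$ a $\Mat_{m_i}(\tilde D_i)$-module, and Morita equivalence produces a $\tilde D_i$-module $W_i$ with the same length. The unique simple $\tilde D_i$-module, lifted from $D_i$, has $F$-dimension $[D_i{:}F]$, so a length-$x_i$ module $W_i$ yields $W_i^{\mathrm{Mat}}$ of $\Delta$-dimension $m_i x_i [D_i{:}F]/[\Delta{:}F] = \ell_i x_i$. The constraint $\dim_\Delta W = n$ is thus exactly $(x_1,\dots,x_t) \in P(A,B)$, and the counting formula
\[
|\calO_{A,B}| \;=\; \sum_{(x_1,\dots,x_t) \in P(A,B)} \prod_{i=1}^t |\Mod(\tilde D_i, x_i)|
\]
follows immediately.

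It remains to analyse $|\Mod(\tilde D_i, x_i)|$. Each $\tilde D_i$ is a noncommutative local Artinian $F$-algebra with unique maximal two-sided ideal $J_i$ satisfying $\tilde D_i/J_i = D_i$, and $\grm_{R_i} = R_i \cap J_i$. When $\dim_{Z_i} \grm_{R_i}/\grm_{R_i}^2 \leq 1$, the ring $R_i$ has principal maximal ideal, and by the structure of $\tilde D_i$ from Proposition~\ref{24} one obtains $J_i^c = \grm_{R_i}^c \tilde D_i$ with each quotient $J_i^{c-1}/J_i^c$ a simple $\tilde D_i$-module isomorphic to $D_i$. Consequently every indecomposable $\tilde D_i$-module is of the form $\tilde D_i/J_i^c$ with $1 \le c \le e_i$, and by Krull--Schmidt any f.g.\ $\tilde D_i$-module of length $x_i$ is uniquely a direct sum of these, classified by partitions of $x_i$ into parts $\le e_i$. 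This proves (\ref{eq:16}) and hence part~(2). When $\dim_{Z_i} \grm_{R_i}/\grm_{R_i}^2 \geq 2$ and $x_i \geq 2$, the length-two non-split extensions of $D_i$ by $D_i$ are parametrized by $1$-dimensional subspaces of $\grm_{R_i}/\grm_{R_i}^2 \hookrightarrow \mathrm{Ext}^1_{\tilde D_i}(D_i,D_i)$, giving infinitely many isomorphism classes since $Z_i \supseteq F$ is infinite whenever the raw orbit set can be. Combined with the sum formula this proves part~(1).

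The main obstacle is the case $\dim_{Z_i} \grm_{R_i}/\grm_{R_i}^2 \leq 1$: one must transport the classical structure theorem for modules over an Artinian principal ideal ring across the noncommutative passage from $R_i$ to $\tilde D_i$. This hinges on invoking Proposition~\ref{24} to identify the radical filtration of $\tilde D_i$ with the powers of $\grm_{R_i}\tilde D_i$, after which the uniserial structure pins down the indecomposables and Krull--Schmidt yields the partition count. Verifying the infinite-family claim in the other case reduces to an $\mathrm{Ext}^1$-computation with a mild orbit analysis under $\Aut_{\tilde D_i}(D_i)=D_i^\times$ acting by scalars on $\grm_{R_i}/\grm_{R_i}^2$, showing that distinct $Z_i$-lines yield non-isomorphic extensions.
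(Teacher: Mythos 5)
Your reduction of $\calO_{A,B}$ to isomorphism classes of right $\Delta\otimes_F A^{\rm o}$-module structures on $V$, the further reduction via Proposition~\ref{24} and Morita equivalence to counting $\wt D_i$-modules, and the uniserial/Krull--Schmidt classification in the case $\dim_{Z_i}\grm_{R_i}/\grm_{R_i}^2\le 1$ all track the paper's route (Lemma~\ref{32}, Theorem~\ref{33}, Proposition~\ref{34}) faithfully, and that part is correct. The genuine gap is in the infinitude claim for part (1). The paper (Proposition~\ref{35}) uses Cohen's theorem to pass to the quotient $R_1=Z[X_1,X_2]/(X_1^2,X_1X_2,X_2^2)$ and $\wt D_1=\wt D\otimes_R R_1$, then writes down the explicit modules $M_a:=\wt D_1/(x_1+ax_2)\oplus D^{\oplus(x-2)}$ for $a\in F$ and separates them by the \emph{module-theoretic invariant} $\Ann(M_a)=\wt D_1(x_1+ax_2)$; distinct $a$ give distinct two-sided ideals, hence non-isomorphic modules, with no further analysis needed. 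Your version instead appeals to a map $\grm_{R_i}/\grm_{R_i}^2\hookrightarrow \mathrm{Ext}^1_{\wt D_i}(D_i,D_i)$ and asserts that distinct $Z_i$-lines give non-isomorphic modules ``after a mild orbit analysis under $\Aut_{\wt D_i}(D_i)=D_i^\times$.'' This is where the argument is not complete: the injection into $\Ext^1$ is never constructed, and — more substantively — to pass from extension classes to isomorphism classes of modules you must quotient $\Ext^1(D_i,D_i)$ by the action of $\Aut(D_i)\times\Aut(D_i)\cong D_i^\times\times D_i^\times$ (pullback on the submodule, pushforward on the quotient), not by a single copy of $D_i^\times$ acting ``by scalars.'' Since $\Ext^1(D_i,D_i)$ is a $(D_i,D_i)$-bimodule and $D_i$ may well be noncommutative, identifying the orbits as $Z_i$-lines is not automatic and needs an actual computation. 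The paper's annihilator trick is exactly the device that bypasses this orbit analysis; without it, your argument is a plausible sketch but not a proof of part (1).
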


%   The last part of this work gives 
%   a precise formula for the number $|\calO_{A,B}|$ of equivalence 
%   classes of $F$-homomorphism from $A$ into $B$.
  % when it is finite. 
%   See Theorem~\ref{36} and Corollary~\ref{37} for the precise
%   statement of our second main result.

%(We should also formulate the generalization of the Neother-Skolem
%theorem)

% (We shall make a discuss about the linearly disjoint and inseprable
% disjoint: $A\otimes_{F}B$ is again a semi-simple algerba over $F$) 

% (We shall focus on a special case how to embed a simple algebra into a
% central simple algebra. We shall introduce the notion of the splitting
% degree. )

% In the last part of this paper we consider the problem of
% the local to global principle for the algebra embedding over global 
% field. 

In the remaining part of this paper we give a few applications of 
main results.
% apply our previ results to  
We analyze the problem of the local to global principle for 
the algebra embeddings over global fields. 
More precisely, let
$A$ be a central simple algebra over a global field $F$ and $K$ a
finite field extension of $F$ with degree $k=[K:F]$ dividing
$\deg(A)$, the degree of $A$. Can the problem 
of the embedding of $K$ into $A$ 
(i.e.  $\Hom_{\falg}(K,A)\neq \emptyset$) be checked locally? 
It turns out that there are many examples
so that the Hasse principle for embedding $K$ in $A$ does not
hold. That is, the condition 
$\Hom(K\otimes_F F_v, A\otimes_F F_v)\neq \emptyset$
for all places $v$ may not imply that 
$\Hom_{\falg}(K, A)\neq \emptyset$.   

\begin{prop}\label{15}
  Let $K$ be any finite separable field extension of $F$ of degree
  $k>1$. Let $\delta=p_1^{n_1}\cdots p_r^{n_r}$ be a positive integer
  divisible by at least two primes, i.e. $r\ge 2$, with
  $k\,\mid\,\delta$. Assume that $k\le \delta/p_i^{n_i}$ for all $i=1,\dots,
  r$. Then there is a central division algebra $\Delta$ over $F$ of
  degree $\delta$ such
  that the local-to-global principle for an embedding of $K$ in
  $\Delta$ fails.  
\end{prop}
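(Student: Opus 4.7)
The plan is to construct $\Delta$ via its local Brauer invariants at two carefully chosen finite places of $F$, and then verify the local-global discrepancy using Theorem~\ref{125}.

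By the Chebotarev density theorem applied to the Galois closure of $K/F$, there are infinitely many finite places of $F$ that split completely in $K$; at each such place every prime of $K$ above it has local degree $1$. Fix two distinct such places $v_1, v_2$. By the Albert--Brauer--Hasse--Noether theorem there exists a Brauer class over $F$ with local invariants $\inv_{v_1} = 1/\delta$, $\inv_{v_2} = -1/\delta$, and $\inv_v = 0$ for $v \notin \{v_1, v_2\}$. These sum to zero in $\Q/\Z$, and the $\lcm$ of their orders equals $\delta$; since period equals index over a global field, this class is represented by a central division algebra $\Delta$ of degree $\delta$.

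To verify that $\Hom_{\falg}(K \otimes_F F_v, \Delta \otimes_F F_v) \neq \emptyset$ at every $v$: outside $\{v_1, v_2\}$ one has $\Delta \otimes_F F_v \cong \Mat_\delta(F_v)$, and the criterion of Theorem~\ref{12}(1) is satisfied, for instance by taking $x_w = [K_w:F_v](\delta/k)$ at each prime $w$ above $v$. At $v \in \{v_1, v_2\}$, $\Delta_v$ is a central division algebra of degree $\delta$ over $F_v$; for any prime $w$ of $K$ above $v$, since $[K_w : F_v] \mid k \mid \delta$, the algebra $\Delta_v \otimes_{F_v} K_w$ has invariant $\pm[K_w:F_v]/\delta$ of order $\delta/[K_w:F_v]$, so its index equals $\delta/[K_w:F_v]$ and its capacity equals $[K_w:F_v]$. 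Theorem~\ref{125} then supplies an embedding $K_w \hookrightarrow \Delta_v$, and composing with the projection $K \otimes_F F_v \twoheadrightarrow K_w$ yields the required $F_v$-algebra homomorphism.

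For the global failure, let $w_0$ be a prime of $K$ above $v_1$ with $[K_{w_0} : F_{v_1}] = 1$. Then
\[
\inv_{w_0}(\Delta \otimes_F K) = [K_{w_0} : F_{v_1}] \cdot \inv_{v_1}(\Delta) = \tfrac{1}{\delta},
\]
which has order $\delta$ in $\Q/\Z$; hence the index of $\Delta \otimes_F K$ over $K$ equals $\delta$ and its capacity equals $1$. Applying Theorem~\ref{125} with $A = K$ (one simple factor) and $B = \Delta$ (of capacity $n = 1$), the necessary divisibility $k = [K:F] \mid n_1 c_1 = 1$ fails since $k > 1$; thus no $F$-algebra embedding $K \hookrightarrow \Delta$ exists. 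The crux is that local non-splitness of $\Delta$ at $v_1$ is transmitted intact to $\Delta \otimes_F K$ at a prime of minimal residue degree, giving precisely the obstruction detected by Theorem~\ref{125}; the hypotheses $r \geq 2$ and $k \leq \delta/p_i^{n_i}$ guarantee that such a $\Delta$ of degree $\delta$ exists (two non-trivial invariants suffice only when $r \geq 2$) and that $k < \delta$ so that $\delta/k > 1$ can accommodate a genuine failure of the principle.
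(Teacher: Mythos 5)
Your construction fails at the ramified places. The issue is that the Hasse principle in this paper (Section~4.4 and Theorem~\ref{46}) is about the existence of local \emph{embeddings}, $\Hom^*_{F_v}(K_v,A_v)\neq\emptyset$, not merely local $F_v$-algebra homomorphisms. At your place $v_1$ (completely split in $K$) you set $\inv_{v_1}(\Delta)=1/\delta$, which forces $\Delta_{v_1}$ to be a central division algebra over $F_{v_1}$ of capacity $s_{v_1}=1$. On the other hand $K_{v_1}=K\otimes_F F_{v_1}\cong F_{v_1}^{\,k}$ with $k>1$. A division algebra contains no nontrivial idempotents, so $F_{v_1}^{\,k}$ cannot embed into $\Delta_{v_1}$; equivalently, in the notation of Theorem~\ref{125} one would need positive integers $n_1,\dots,n_k$ with $\sum n_j = n = 1$, which is impossible. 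Your argument only produces a non-injective homomorphism by projecting $K_{v_1}\twoheadrightarrow K_w$ and then embedding one factor, which lands in $\Hom_{\falg}$ but not in $\Hom^*_{\falg}$. Hence with your choice of invariants both sides of the local-global equivalence are false simultaneously, so you have not exhibited a failure of the Hasse principle.

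The fix is to keep the local capacities at the ramified places at least $k$, which is precisely where the hypothesis $k\le \delta/p_i^{n_i}$ enters. The paper does this by distributing the ramification over $2r$ places $v_1,v_1',\dots,v_r,v_r'$ that split completely in $K$, with $\inv_{v_i}(\Delta)=-\inv_{v_i'}(\Delta)=1/p_i^{n_i}$ and $\inv_v(\Delta)=0$ elsewhere. Then the local index at $v_i$ is $p_i^{n_i}$ and the local capacity is $s_{v_i}=\delta/p_i^{n_i}\ge k$, so $\calE_{v_i}\neq\emptyset$ and local embeddings exist everywhere; yet $\gcd\{s_{v_i}\}_i=1$ forces the global obstruction class $\bar\bfx$ to be nonzero unless $k=1$. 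This is why $r\ge 2$ and the bound on $k$ are both needed: with a single ramified prime-power block (or with the maximal invariant $1/\delta$, as in your attempt) the local capacity collapses to $1$ and the local embedding already fails. A minor additional slip: at the unramified places the correct choice is $x_w=\delta/k$ for each $w\mid v$ (so that $\sum_{w\mid v} k_w x_w = \delta$), not $x_w=[K_w:F_v]\cdot(\delta/k)$.
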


See \S~\ref{sec:44} for the construction of such central division
algebras $\Delta$. This is the first example for the failure of the
Hasse principle of embeddings fields in central simple algebras. 
Furthermore, we give a necessary and sufficient
condition for a pair $(K,A)$ so that the Hasse principle in question
holds. We associate to each pair $(K,A)$ an element 
\[ \bar \bfx=(\bar \bfx_w)_{w\in V^K} \in \bigoplus_{w\in V^K} \Q/\Z \]
 as follows, where $V^K$ and $V^F$ denote the set of all
places of $K$ and $F$, respectively. Put 
\[ \bfx_w:=\frac{\c(A_v)\cdot \gcd(k_w, d_v)}{[K:F]}\in \Q_{>0} \] 
and let $\bar \bfx_w$ be the class of $\bfx_w$ in $\Q/\Z$, where
\begin{itemize}
\item $A_v:=A\otimes_F F_v$ and 
$\c(A_v)$ denotes the capacity of the central simple algebra
$A_v$, where $v$ is the place of $F$ below $w$,
\item  $K_w$ is the
completion of $K$ at $w$ and $k_w:=[K_w:F_v]$, and
\item  $d_v$ is the index of the algebra $A_v$.
\end{itemize}
% We prove 

\begin{thm}[]\label{16} Notations as above. Then there
  is an embedding 
  of $K$ into $A$ over $F$ if and only if there is an embedding of 
  $K_v:=K \otimes_F F_v$ into $A_v$ over $F_v$ for all $v\in V^F$ and
  the element $\bar \bfx$ vanishes. 
\end{thm}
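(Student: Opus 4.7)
The plan is to bridge the global and local embedding criteria supplied by Theorem~\ref{125} via the Albert--Brauer--Hasse--Noether (ABHN) index formula applied to the central simple $K$-algebra $\Delta\otimes_F K$.

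Write $A=\Mat_n(\Delta)$ with $\Delta$ a central division $F$-algebra of index $d$. Applying Theorem~\ref{125} in the case $s=1$, $A_1=K$, $B=A$ identifies the global embeddability of $K$ in $A$ with the single divisibility condition $k\mid nc$, where $c$ is the capacity of the central simple $K$-algebra $\Delta\otimes_F K$. At each place $v$ of $F$, write $K\otimes_F F_v=\prod_{w\mid v}K_w$ and $A_v=\Mat_{c(A_v)}(\Delta_v)$; the same theorem reduces the local embeddability of $K_v$ in $A_v$ to a partition-type condition on $c(A_v)$ involving the capacities $c_w$ of $\Delta_v\otimes_{F_v}K_w$ over $K_w$.

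The key local computation I would carry out is: for central simple $\Delta_v/F_v$ of index $d_v$ and a finite extension $K_w/F_v$ of degree $k_w$, the Hasse invariant satisfies $\inv(\Delta_v\otimes_{F_v}K_w)=k_w\inv(\Delta_v)$, whose order in $\Q/\Z$ is $d_v/\gcd(k_w,d_v)$. Hence $\ind(\Delta_v\otimes_{F_v}K_w)=d_v/\gcd(k_w,d_v)$ and $c_w=\gcd(k_w,d_v)$. The same quantity $I_w:=d_v/\gcd(k_w,d_v)$ is the index of $\Delta\otimes_F K$ at $w$, and ABHN for the global field $K$ yields
\[
\ind(\Delta\otimes_F K)=\lcm_{w\in V^K} I_w,\qquad c=d\bigl/\!\lcm_{w\in V^K} I_w.
\]

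With these ingredients in hand, the global divisibility $k\mid nc$ becomes $k\cdot\lcm_w I_w\mid nd$, which by the defining property of the lcm is equivalent to $k I_w\mid nd$ for every $w$; multiplying through by $\gcd(k_w,d_v)/d_v$ and using $c(A_v)=nd/d_v$, this is in turn equivalent to $k\mid c(A_v)\gcd(k_w,d_v)$, i.e.\ to $\bar\bfx_w=0$. Consequently $K\hookrightarrow A$ iff $\bar\bfx=0$. The forward implication also trivially supplies all local embeddings by base change; in the backward direction the hypothesized local embeddings are in fact forced by $\bar\bfx=0$, so the joint ``local $+$ obstruction'' formulation of the theorem is recovered. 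The main obstacle will be the uniform verification of the local invariant/index formula across every place (including the archimedean ones) and the clean appeal to ABHN; once these are set the matching of divisibilities via lcm is essentially formal.
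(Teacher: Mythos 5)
Your proposal is correct, and it is a clean, self-contained argument. The chain of equivalences
\[
K\hookrightarrow A \iff k\mid nc \iff k\cdot\!\lcm_w I_w\mid nd \iff kI_w\mid nd\ \ (\forall w) \iff k\mid c(A_v)\gcd(k_w,d_v)\ \ (\forall w) \iff \bar\bfx=0
\]
is sound: the first step is Theorem~\ref{29} with $A_1=K$, the second is ABHN's index formula $\ind(\Delta\otimes_FK)=\lcm_w I_w$ together with $c=\deg(\Delta)/\ind(\Delta\otimes_FK)$, the third is the universal property of $\lcm$ after noting $\lcm_w(kI_w)=k\lcm_wI_w$, and the fourth is the identity $\frac{nd}{kI_w}=\frac{c(A_v)\gcd(k_w,d_v)}{k}$. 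Your observation that $\bar\bfx=0$ already forces the local embeddability (via the global embedding and flat base change), so that the stated ``local $+$ obstruction'' form is equivalent to ``$\bar\bfx=0$ alone,'' is exactly right and closes the logical gap between what you prove and what the theorem asserts.

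For comparison: the paper itself does not give a proof of this theorem but simply cites \cite[Theorem 3.6]{shih-yang-yu}, and the supporting machinery the paper sets up in \S\ref{sec:43} (the parametrizing sets $\calE_v$ of local solutions, the bijections $e_v:A_v^\times\backslash\Hom^*_{F_v}(K_v,A_v)\isoto\calE_v$, and the explicit formula $\bfx_w=e_v([\varphi_v])_w$) suggests that the cited proof proceeds by matching explicit local orbit data at each place and then invoking a global compatibility. Your route short-circuits that: once Theorem~\ref{29} reduces everything to a single global divisibility $k\mid nc$, the ABHN index theorem turns the question into a purely arithmetic statement about $\lcm$'s, and the local sets $\calE_v$ never need to appear. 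What the paper's framework buys is the finer information of Theorem~\ref{33} (a bijection between the orbit sets and $\calE_v$), which is needed to interpret $\bfx_w$ as an actual local orbit invariant; what your route buys is a shorter proof of the bare local-global criterion. One stylistic caveat: the phrase ``multiplying through by $\gcd(k_w,d_v)/d_v$'' should be reworded, since one cannot multiply a divisibility by a rational scalar — what is really being used is that $\tfrac{nd}{kI_w}$ and $\tfrac{c(A_v)\gcd(k_w,d_v)}{k}$ are literally the same rational number, so one is an integer if and only if the other is.
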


Theorem~\ref{16} states that the element $\bar x$ is the only
obstruction for the Hasse principle for an embedding of $K$ in $A$. 
This can be used to study  the Hasse principle 
for {\it families} of embeddings of fields in central simple algebras. For
example, if $\delta=p^n$ for $n=1,2,3$, where $p$ is
a prime number, then for any central simple algebra $A$ of
degree $\delta$ and any finite field extension $K$ with
$[K:F]\mid\delta$, then the Hasse principle in question for $K$ and $A$
holds. One can also show that this is optimal. That is, if $\delta$ is
not of this form, then there exist a central simple algebra $A$ of
degree $\delta$ and a finite separable field extension $K$ with
$[K:F]\mid \delta$ such that the corresponding Hasse principle fails. 
We refer the reader to \cite{shih-yang-yu} for details. 

Another
applications are the determination of endomorphism algebras of abelian
surfaces with quaternion algebras and the determination of
characteristic polynomials of central simple algebras; see Section for
details. \\

% in S.-C. Shih, T.-C. Yang and the author
% \cite{shih-yang-yu} further study Among others, we obtain the following
% result; see \cite[Theorem 5.15]{shih-yang-yu}.

%\begin{thm}\label{17}
%  The positive integers $\delta$ with the property that for any
%  central simple algebra $A$ over $F$ of degree $\delta$ and any
%  finite field extension $K$ of degree $[K:F]\mid\delta$ the Hasse
%  principle for $(K,A)$ holds are $1, p, p^2, p^3$, where $p$ is a
%  prime.
%\end{thm}

The paper is organized as follows. In Section 2 we determine when 
there is an algebra homomorphism between two given semi-simple
algebras over a field in terms of numerical invariants. 
Section 3 treats the set of equivalence classes
of homomorphisms. We give a necessary and sufficient condition when
this set is finite, and when this set is finite, we determine
precisely its cardinality. 
%In particular, 
%this generalizes results of F. Pop and H. Pop \cite{pop:ns}. 
In Section 4, we give a few applications of general reults 
obtained in Sections 2 and 3.

%  we
% setup a method for studying the local-global principle for embedding
% fields into central simple algebras over global fields in the last
% section.

%(Comment on Morales work) \\
%(Add the discussion of using the Cohen's theorem)\\
%(Add the discussion of when the Noether-Skolem theorem holds,
%including some special cases one can find in Gopal Prasad and Rapinchuk)\\
%(Add local-global principle holds for Galois extension field)\\
%(Add lots of applications) \\

% (We shall discuss the meaning of ``writing $K\otimes Z$ into a product
% of fields'' Meaning to decompose the irreducible polynomials into the
% product of irreducible polynomials.)

% (Add comments and remarks in the last sections.)
 
% use this result to
% classify isogeny classes of $p$-divisible groups with endomorphism
% structures.  

% --------------------------------

\section{The existence of $F$-algebra homomorphisms}
\label{sec:02}

\subsection{Setting}
\label{sec:21}

% Recall that the {\it degree}, {\it capacity}, and {\it index} of a
% central simple algebra $A$ over a field $F$ are defined as 
%\[ \d(A):=\sqrt{[A:F]},\quad \c(A):=m,\quad \i(A):=\sqrt{[D:F]}, \]
% if $A\cong M_m(D)$, where $D$ is the division part of $A$. 

% Recall (cf.~\cite[p.~181]{pierce}) that an $F$-algebra $A$ is said to
% be {\it separable}  
% if $A$ is a projective right $A^e$-module, where 
% $A^e=A^{\rm o}\otimes A$ is the enveloping algebra of $A$. 
% It is known
% (\cite[Chpater 10, p.~184, 186, 191]{pierce}) that an $F$-algebra $A$
% is separable if and only if $A$ is a finite-dimensional semi-simple
% algbera over $F$ such that its center is a product of separable
% field extensions of $F$. It is also equivalent to that $A$ is a
% finite-dimensional $F$-algebra such that the bilinear form
%\[ (\, ,\,):A\times A \to F, \quad (x,y):=\Tr(T_x T_y) \]
% is non-degenerate,
% where $T_x\in \End_F(A)$ is the left multiplication by $x$.    
Let $F$ denote the ground field, which is arbitrary in this and next sections.
All $F$-algebras in this paper 
are assumed to be finite-dimensional as
$F$-vector spaces. As the standard convention, an $F$-algebra
homomorphism between two $F$-algebras is a ring homomorphism
over $F$, which particularly sends the identity to the identity.  
For the convenience of discussion, we introduce the following basic
notion.

\begin{defn}\label{21}
  Let $V$ be a finite-dimensional vector space 
  over $F$, and
  $A$ an (finite-dimensional) arbitrary $F$-algebra. We say that {\it
  $V$ is $A$-modulable} if there is a right (or left) $A$-module structure on
  $V$. 
  If $B$ is any $F$-subalgebra of $A$ and $V$ is already a right
  (resp. left)
  $B$-module, then by saying $V$ is $A$-modulable we mean that the right
  (resp. left) $A$-module structure on $V$ is required to be 
   compatible with the underlying $B$-module structure on $V$.    
\end{defn}

\begin{thm}\label{22} 
  Let $A$ and $B$ be two semi-simple $F$-algebras.
  We realize $B$ as $\prod_{j=1}^r \End_{\Delta_j}(V_j)$, where
  $\Delta_j$ is a division $F$-algebra and $V_j$ is a right
  $\Delta_j$-module for each $j$. Write $A=\prod_{i=1}^s A_i$ into simple
  factors as $F$-algebras.  
\begin{enumerate}
  \item The set $\Hom_{\falg}(A,B)$ 
  is non-empty if and only if for each $j=1,\dots, r$, there is a
  decomposition of $V_j$  
\[ V_j=V_{j1}\oplus \dots, \oplus V_{j s} \]
into $\Delta_j$-subspaces such that the $\Delta_j$-vector
  space $V_{ji}$ is $\Delta_j\otimes_F A^{\rm
  o}_i$-modulable for $i=1,\dots, s$, where $A^{\rm o}_i$ denotes the
  opposite algebra of $A_i$. 
\item The set $\Hom^*_{\falg}(A,B)$ is non-empty if and only if
  in addition each direct sum $\oplus_{j=1}^r V_{ji}$ is non-zero for
  $i=1,\dots,s$.  
%   one also has the further property that each direct sum 
%   $\oplus_{j=1}^r V_{ji}$ is non-zero for $i=1,\dots,s$. 
\end{enumerate}
\end{thm}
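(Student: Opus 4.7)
The plan is to turn algebra homomorphisms into module structures, using the standard dictionary between $F$-algebra homomorphisms $A\to \End_{\Delta}(V)$ and compatible $A$-module structures on the right $\Delta$-module $V$. I will first reduce from $B$ to its simple factors, then decompose using the central idempotents of $A$, and finally read off the equivalence in both directions.

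First I would observe that, because the product decomposition $B=\prod_{j=1}^r B_j$ with $B_j=\End_{\Delta_j}(V_j)$ arises from orthogonal central idempotents $e_j\in B$ summing to $1$, the assignment $\varphi\mapsto (\varphi_j)_j$ with $\varphi_j(a)=e_j\varphi(a)$ gives a bijection
\[ \Hom_{\falg}(A,B)\;\cong\;\prod_{j=1}^r \Hom_{\falg}(A,B_j). \]
Moreover $\varphi$ is an embedding precisely when $\cap_j \ker\varphi_j=0$. Thus it suffices to analyse each $\varphi_j:A\to \End_{\Delta_j}(V_j)$ separately and then reassemble.

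Next I would invoke the basic dictionary: giving $\varphi_j\in \Hom_{\falg}(A,\End_{\Delta_j}(V_j))$ is the same as giving a left $A$-action on $V_j$ that commutes with the right $\Delta_j$-action; equivalently, a right $\Delta_j\otimes_F A^{\mathrm{o}}$-module structure on $V_j$ extending the given $\Delta_j$-structure. Now apply the central idempotents $1=e_1+\cdots+e_s$ of $A$ with $e_iA=A_i$: setting $V_{ji}:=\varphi_j(e_i)V_j$ yields a $\Delta_j$-direct sum decomposition $V_j=\bigoplus_i V_{ji}$ in which $V_{ji}$ is stable under $A_i$, hence is $\Delta_j\otimes_F A_i^{\mathrm{o}}$-modulable in the sense of Definition~\ref{21}. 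Conversely, given any such decomposition with each $V_{ji}$ modulable over $\Delta_j\otimes_F A_i^{\mathrm{o}}$, the diagonal map letting $A_i$ act on $V_{ji}$ by the chosen module structure (and by $0$ on the other summands) is a well-defined $F$-algebra homomorphism $A\to \End_{\Delta_j}(V_j)$. This proves part (1).

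For part (2), I would combine these equivalences with the observation that $\ker\varphi$ is a two-sided ideal of the semisimple algebra $A=\prod_i A_i$, hence a product of some of the $A_i$. Thus $\varphi$ is injective iff the restriction of each $\varphi_j$ to each simple factor $A_i$ is nonzero on at least one $j$, i.e.\ iff $\bigoplus_{j=1}^r V_{ji}\neq 0$ for every $i$. The only minor subtlety, and the one place to be careful, is keeping the left/right conventions straight so that $A$-actions commuting with the right $\Delta_j$-action really correspond to $\Delta_j\otimes_F A^{\mathrm{o}}$-module structures; once that bookkeeping is done, both directions and the additional condition in (2) follow formally from the idempotent decomposition.
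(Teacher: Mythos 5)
Your proposal is correct and follows essentially the same approach as the paper: reduce to simple $B$, use the dictionary between $F$-algebra homomorphisms $A\to\End_{\Delta_j}(V_j)$ and left $A$-actions on $V_j$ commuting with the right $\Delta_j$-action (equivalently, right $\Delta_j\otimes_F A^{\mathrm{o}}$-module structures), split $V_j$ via the central idempotents of $A$, and note that injectivity is detected by each simple factor $A_i$ acting nontrivially on some $V_{ji}$. You are somewhat more explicit than the paper about the idempotent bookkeeping and the kernel-of-a-semisimple-algebra argument, but the underlying proof is the same.
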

\begin{proof}
%\subsection{Proof of Theorem~\ref{12}}
Suppose we have a homomorphism $\varphi:A\to B$ of $F$-algebras, then
we have a $\Delta_j$-linear action of $A$ on $V_j$ for each $j=1,\dots,
r$. The decomposition $A=\prod_{i=1}^s A_i$ gives a decomposition of
$V_j$
\[ V_j=V_{j1}\oplus \dots \oplus V_{js}, \]
where each $V_{ji}$ is a $(A_i,\Delta_j)$-bimodule or a
right $\Delta_j\otimes_F A^{\rm o}_i$-module. If $\varphi$ is an
embedding, then for each $i$ at least one of $V_{ji}$ 
for $j=1,\dots, r$ is non-zero. Therefore, the direct sum $\oplus_j
V_{ji}$ is non-zero. 
Conversely, suppose that we are given such a decomposition with
these properties. Then we have a $\Delta_j$-linear action of $A$ 
on each
vector space $V_j$; this gives an $F$-algebra homomorphism
$\varphi:A\to B$. 
Moreover, suppose that each direct sum  $\oplus_{j=1}^r V_{ji}$ is
non-zero. Then the map restricted to $A_i$  is injective for each
$i$. Therefore, $\varphi$ is an embedding. This proves
the theorem. \qed
\end{proof}

\subsection{Tensor products of two semi-simple algebras}
\label{sec:22}
According to Theorem~\ref{22} we shall need to determine whether a
vector space is $A\otimes_F B$-modulable for semi-simple algebras $A$
and $B$. When $A$ and $B$ are separable $F$-algebras, the
tensor product $A\otimes_F B$ is again separable 
\cite[7.19 Corollary, p.~101]{reiner:mo}. 
However, the tensor product $A\otimes_F B$ of two semi-simple algebras
$A$ and $B$ needs not to be semi-simple. The structure of $A\otimes_F
B$  seems not be well-studied in the literature when 
both $A$ and $B$ are not separable $F$-algebras.

% We shall study the structure of the tensor product $A\otimes_F B$ of
% semi-simple $F$-algebras $A$ and $B$, and determine  
% when a vector space is $A\otimes_F B$-modulable. 
% When $A$ or $B$ is a central simple $F$-algebra, the structure of 
% tensor product $A\otimes_F B$ is known as again a semi-simple
% algebra It does not seem to be studied iwhen both are 

% semi-simple $F$-algebras $A$ and $B$. For the convenience of
% discussion we make the following definition.   

%\begin{defn}\label{22}
%   Let $D$ be a division algebra over a field $F$ with center $Z$. 
%   An $F$-algebra $\wt D$ is said to be a {\it central local Artinian
%   extension of $D$} if there is a local Artinian commutative
%   $Z$-algebra $R$ with residue field equal to $Z$ so that $\wt D\simeq
%   D\otimes_Z R$. Any central local Artinian extension $\wt D$ of $D$
%   is a  finite $D$-algebra whose maximal semi-simple quotient 
%   $(\wt D)^{ss}:=\wt D/{\rm rad}(\wt D)$ is equal to $D$, where ${\rm
%   rad}(\wt D)$ denotes the Jacobson radical of $\wt D$.  
% \end{defn}

% We need the following theorem of Cohen for equi-characteristic
% complete 
% Noetherian local rings; see \cite{matsumura:ca80}.

%\begin{thm}[Cohen]
%   Every complete Noetherian local commutative ring $(R,\grm, k)$ 
%   containing a field admits at least one coefficient subfield. That is,
%   there is a subfield $K\subset A$ such that the map $K \to R/\grm=k$
%   is an isomorphism. 
%\end{thm}

\begin{prop}\label{24}
Let $A$ and $B$ be two semi-simple $F$-algebras. 
Let 
\[ (A\otimes_F B)^{ss}\simeq \prod_{i=1}^t \Mat_{m_i}(D_i) \]
be the Wedderburn decomposition of the maximal semi-simple quotient 
$(A\otimes_F B)^{ss}$.
%$A\otimes_F B$ is isomorphic to $\prod_{i=1}^t
%\Mat_{m_i}(\wt D_i)$
Then the $F$-algebra 
$A\otimes_F B$ is isomorphic to $\prod_{i=1}^t
\Mat_{m_i}(\wt D_i)$ for some Artinian $F$-algebras $\wt D_i$ with
$(\wt D_i)^{ss}=D_i$. 
% where each $\wt D_i$ is a central local Artinian
%extension of some division $F$-algebra $D_i$. 
%division algebra over $F$ with center $Z_i$ and 
%$D_i[t]/(t^{e_i}):=D_i\otimes_{Z_i} 
%Z_i[t]/(t^{e_i})$.  
\end{prop}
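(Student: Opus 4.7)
The plan is to reduce to the case where $A$ and $B$ are both simple, identify the center of $A\otimes_F B$ as $K\otimes_F L$ with $K=Z(A)$ and $L=Z(B)$, split $A\otimes_F B$ along the local Artinian factors of this center, and then invoke the structure theorem for Azumaya algebras over local Artinian rings.

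First, decompose $A=\prod_a A_a$ and $B=\prod_b B_b$ into simple factors, so that $A\otimes_F B=\prod_{a,b} A_a\otimes_F B_b$ and the Wedderburn decomposition of $(A\otimes_F B)^{ss}$ splits correspondingly; it suffices to treat the case where $A$ and $B$ are each simple. Assume now that $A$ is central simple over $K=Z(A)$ and $B$ is central simple over $L=Z(B)$. The standard centralizer identity for central simple algebras gives $Z(A\otimes_F B)=K\otimes_F L$, a finite-dimensional commutative $F$-algebra, hence a product $\prod_{i=1}^t R_i$ of local Artinian $F$-algebras with residue fields $Z_i=R_i/\grm_{R_i}$. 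The orthogonal central idempotents of this decomposition yield $A\otimes_F B=\prod_{i=1}^t S_i$ with $S_i=(A\otimes_F B)\otimes_{K\otimes_F L}R_i\cong (A\otimes_K R_i)\otimes_{R_i}(R_i\otimes_L B)$. Each factor on the right is the base change of a central simple algebra to the local ring $R_i$, hence Azumaya over $R_i$, and their tensor product $S_i$ is therefore itself Azumaya over $R_i$. Reducing modulo $\grm_{R_i}$ in each factor and comparing with the given Wedderburn decomposition (after relabeling if necessary) identifies $S_i/\grm_{R_i}S_i$ with $\Mat_{m_i}(D_i)$.

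The main obstacle is the final structural step: an Azumaya algebra $S_i$ over a local Artinian ring $R_i$ whose residue algebra $S_i/\grm_{R_i}S_i$ is $\Mat_{m_i}(D_i)$ must be exhibited in the form $\Mat_{m_i}(\wt D_i)$ for some Azumaya $R_i$-algebra $\wt D_i$ with $\wt D_i/\grm_{R_i}\wt D_i=D_i$. I would pick a primitive idempotent $\bar e\in S_i/\grm_{R_i}S_i$ (a diagonal matrix unit), lift it to an idempotent $e\in S_i$ using nilpotence of $\grm_{R_i}$, and set $\wt D_i=eS_ie$. Since the two-sided ideal generated by $\bar e$ in the simple ring $\Mat_{m_i}(D_i)$ is the whole ring, Nakayama's lemma forces $S_ieS_i=S_i$, so $e$ is a full idempotent; Morita theory then identifies $S_i$ with $\End_{\wt D_i}(S_ie)$, and lifting a complete orthogonal set of primitive idempotents of $\Mat_{m_i}(D_i)$ to pairwise orthogonal idempotents of $S_i$ exhibits $S_ie$ as a free right $\wt D_i$-module of rank $m_i$, whence $S_i\cong\Mat_{m_i}(\wt D_i)$. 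Finally, $\grm_{R_i}\wt D_i$ is nilpotent and the quotient $\wt D_i/\grm_{R_i}\wt D_i=D_i$ is a division algebra, so the Jacobson radical of $\wt D_i$ equals $\grm_{R_i}\wt D_i$ and $(\wt D_i)^{ss}=D_i$; and $\wt D_i$ is finitely generated over $R_i$, hence finite-dimensional over $F$, hence Artinian, completing the argument.
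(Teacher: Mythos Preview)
Your proof is correct and follows essentially the same route as the paper: reduce to $A,B$ simple, split $A\otimes_F B$ along the local Artinian factors $R_i$ of $Z(A)\otimes_F Z(B)$, identify the semisimple quotient of each factor as $\Mat_{m_i}(D_i)$, and then lift idempotents through the nilpotent radical to exhibit the factor as $\Mat_{m_i}(\wt D_i)$ with $\wt D_i=eS_ie$. The only cosmetic difference is packaging: the paper builds a full system of matrix units $e_{ij}$ explicitly (citing Reiner's lifting theorems (6.18)--(6.19)), whereas you phrase the same lift via the Azumaya/Morita language and the observation that $\wt D_i$ is local so the summands $e_jS_ie$ of $S_ie$ are free of rank one.
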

\begin{proof}
  Write $A=\prod_i A_i$ and $B=\prod_j B_j$ into simple factors. Then
  $A\otimes_F B\simeq \prod_{i,j} A_i\otimes_F B_j$. Therefore,
  without loss of generality we may
  assume that $A$ and $B$ are simple. Let $K$ and $Z$ be the center of
  $A$ and $B$, respectively. Since $K\otimes_F Z$ is a commutative 
  Artinian
  $F$-algebra, it is isomorphic to the product $\prod_{i=1}^t R_i$ of
  some local Artinian  $F$-algebra $R_i$. Each $R_i$ is a both
  $K$-algebra and $Z$-algebra. 
% By Cohen's theorem for
% equi-characteristic complete Noetherian local rings, the ring $R_i$
%  contains a subfield $Z_i$ which is equal to its residue field. 
%There
%  are natural field embeddings from $K$ and $Z$ into $Z_i$.
We have
  \begin{equation}
    \label{eq:21}
    A\otimes_F B \simeq A\otimes_K (K \otimes_F Z) \otimes_Z B\simeq
    \prod_{i=1}^t A\otimes_K  R_i \otimes_Z B. 
  \end{equation}
Let $R$ be one of $R_i$, and  let $\grm$ be the maximal ideal of
  $R$ and $Z':=R/\grm=(R)^{ss}$ be the
  residue field.
Put $\calC:=A\otimes_K  R \otimes_Z B$. 
Since $A$ and $B$ are respective central simple $K$ and $Z$-algebras,
  the map $I\mapsto A\otimes I \otimes B$ gives a bijection 
between the set of (2-sided) ideals of $R$ and that of $\calC$. 
In particular, $\rad(\calC)=A\otimes \grm \otimes B$. Put $\ol
{\calC}:=\calC/\rad(\calC)$. One has
\[ \ol {\calC}:=A\otimes_K Z' \otimes_Z B=\Mat_{m}(D), \]
where $D$ is a central division $Z'$-algebra. 

Let $\{\epsilon_{ij}\}\subset \ol \calC$ be the standard basis for
$\Mat_m(D)$ over $D$; one has $\epsilon_{ij} \epsilon_{kl}=\delta_{jk}
\epsilon_{il}$ for $1\le i,j,k,l\le m$. By \cite[(6.19) Theorem,
p.~86]{reiner:mo}, there exist orthogonal idempotents $e_{11}, \dots,
e_{mm}\in \calC$ such that $\bar e_{ii}=\epsilon_{ii}$ for all $i$ and
$1=e_{11}+\dots+e_{mm}$. For $i=1,\dots, m-1$, choose an element $e_{i,i+1}\in
e_{ii}\calC  e_{i+1,i+1}$ with $\bar e_{i,i+1}=\epsilon_{i,i+1}$. 
The right 
multiplication by $e_{i,i+1}$ (denoted by $e_{i,i+1}$ again) 
is a lift of the right multiplication by
$\epsilon_{i,i+1}: \ol \calC \epsilon_{ii}\simeq
\ol \calC \epsilon_{i+1,i+1}$. By 
\cite[(6.18) Theorem, p.~85]{reiner:mo}, the map 
\[ e_{i,i+1}: \calC e_{ii}\simeq \calC e_{i+1,i+1} \]
is a $\calC$-linear isomorphism. Let
$e_{i+1,i}\in e_{i+1,i+1} \calC e_{ii}$ be an element 
such that the right multiplication by $e_{i+1,i}$ is the
inverse map of the map $e_{i,i+1}: \calC e_{ii}\to \calC
e_{i+1,i+1}$. 
One
thus has $e_{i,i+1} e_{i+1,i}=e_{ii}$ and 
$e_{i+1,i} e_{i,i+1}=e_{i+1,i+1}$. For $i<j$, define $e_{ij}:=e_{i,i+1}
\dots e_{j-1,j}$ and $e_{ji}:=e_{j,j-1} \dots e_{i+1,i}$. Put $\wt
D:=e_{11} \calC e_{11}$. As a left $\wt D$-module, $\calC$ is
generated by $\{e_{ij}\}$ and it is easy to check that
$e_{ij} e_{kl}=\delta_{jk} e_{il}$ for $1\le i,j,k,l\le m$. This shows
that $\calC\simeq \Mat_m(\wt D)$. This completes the proof of
the proposition. \qed             
% $A_{Z_i}:=A\otimes_K Z_i$ and $B_{Z_i}:=B\otimes_Z Z_i$; they are 
% central simple algebras over $Z_i$. Then
%\[  A\otimes_F B \simeq \prod_{i=1}^t (A_{Z_i} \otimes_{Z_i} B_{Z_i})
% \otimes_{Z_i} R_i\simeq \prod_{i=1}^t \Mat_{m_i}(D_i) \otimes_{Z_i} 
% R_i = \prod_{i=1}^t \Mat_{m_i}(\wt D_i),\]
% where $D_i$ is a central division algebra over 
% $Z_i$ and $\wt D_i:= D_i\otimes_{Z_i} R_i$. This completes the
% proof of the lemma. \qed
\end{proof}

%  Let $Z_i$ be the residue
%   field  Write $K=F[X]/(f(X))$, where $f(X)$ is a
%   monic irreducible polynomal in $F[X]$. Factorize
%  \[ f(X)=p_1(X)^{e_1}\dots p_r(X)^{e_r} \] into the product of
%   irreducible polynomials in $Z[X]$, where $p_1(X),\dots, p_r(X)$ are
%   distinct irreducible polynomials. Then 
%\begin{equation}
%  \label{eq:22}
%   K\otimes_F Z\simeq Z[X]/(f(X))\simeq \prod_{j=1}
%   Z[X]/(p_j(X)^{e_j})=\prod_{j=1}^r Z[x_j],    
%\end{equation}
% where $x_j$ is the image of $X$ in $Z[X]/(p_j(X)^{e_j})$. 
% If the exponent $e_j>1$ for some $j$ then ${\rm char} (F)=p>1$ and
% $e_j$ is a power of $p$. In this case, put $Z_j:=Z[x_j^{e_j}]\subset
% Z[x_j]$, which is a field containing $K$ and $Z$. Then 
%\[ Z[x_j]=Z_j[x_j]\simeq Z_j[X]/(X^{e_j}-x_j^{e_j})\simeq
% Z_j[t]/(t^{e_j}). \]  
% Now
%\[ A\otimes_F B \simeq A\otimes_K (K \otimes_F Z) \otimes_Z B\simeq \]
 
%  write $A=\Mat_m{E}$ and
%   $B=\Mat(\Delta)$, where $E$ and $\Delta$ are division algebra over
%   $F$. Put 
%\end{proof}

\begin{lemma}\label{25}
  Let $D$ be a division algebra over a field $F$ and 
  $\wt D$ be an Artinian $F$-algebra with 
  maximal semi-simple quotient $(\wt D)^{ss}=D$. Then 
  an $F$-vector space
  $V$ is $\wt D$-modulable if and only if 
\[ [D:F]\mid\dim_F V. \]   
\end{lemma}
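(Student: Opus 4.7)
My plan is to exploit the fact that $\wt D$ is local Artinian: since $(\wt D)^{ss}=D$ is simple, $\rad(\wt D)$ is the unique maximal two-sided ideal of $\wt D$, so every simple $\wt D$-module is killed by $\rad(\wt D)$ and is therefore a simple $D$-module. Because $D$ is a division algebra, its only simple module (say, as a right module) is $D$ itself, whose $F$-dimension is $[D:F]$. This gives me a uniform handle on every $\wt D$-module via its composition factors.

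For the ``only if'' direction, I would take a $\wt D$-module structure on $V$ and consider the finite filtration
\[ V \supseteq \rad(\wt D)\,V \supseteq \rad(\wt D)^2 V \supseteq \dots \supseteq \rad(\wt D)^N V = 0, \]
which exists by Nakayama/the nilpotence of $\rad(\wt D)$. Each successive quotient is annihilated by $\rad(\wt D)$, hence is a $D$-module, hence has $F$-dimension a multiple of $[D:F]$. Summing the dimensions gives $[D:F]\mid \dim_F V$. (Equivalently, one can invoke a Jordan--H\"older composition series and observe that every composition factor is isomorphic to the unique simple $\wt D$-module $D$, which has $F$-dimension $[D:F]$.)

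For the ``if'' direction, suppose $\dim_F V = n [D:F]$. I would construct the required module structure by transport of structure: form $W := D^n$, viewed as a right $\wt D$-module through the quotient map $\wt D \twoheadrightarrow D$. Then $\dim_F W = n[D:F] = \dim_F V$, so any $F$-linear isomorphism $V \isoto W$ lets me pull back the $\wt D$-action to $V$.

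I do not anticipate a serious obstacle here; the main subtlety is making sure to work with the correct (right vs.\ left) module convention consistent with the paper's setup in Definition~\ref{21}, and to note explicitly that $\wt D$ being local Artinian with residue ring $D$ forces every simple $\wt D$-module to coincide with the simple $D$-module $D$ itself, which pins down the $F$-dimension of every composition factor.
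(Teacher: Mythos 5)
Your proof is correct and follows essentially the same route as the paper's: the ``only if'' direction uses the radical filtration $V \supseteq \rad(\wt D)V \supseteq \rad(\wt D)^2 V \supseteq \dots$ with $D$-module graded pieces, and the ``if'' direction inflates a $D$-module structure along the quotient $\wt D \twoheadrightarrow D$. Your extra remarks about $\wt D$ being local and all composition factors being $D$ are fine but not needed beyond what the filtration already gives.
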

\begin{proof}
  If $V$ is a $\wt D$-module, then there exists a filtration of $\wt
  D$-submodules $V^i:=({\rad}(\wt D))^i V$. Each graded piece
  $V^i/V^{i+1}$ is a $D$-module, and hence  
\[ [D:F]\mid\dim_F V^i/V^{i+1}. \]
  Since $\dim_F V=\sum_i \dim_F  V^i/V^{i+1}$, one has $[D:F]\mid\dim_F
  V$.  
  Conversely, if $V$ is a $D$-module, 
  then it is also a $\wt D$-module by the
  inflation. The condition $[D:F]\mid\dim_F V$ implies the a $D$-module
  structure on $V$ exists. This completes the proof of the lemma. \qed
% This shows the direction $(\leftSo we have
%   $[D:F]|\dim_F V$. Conversely, if $[D:F]|\dim_F V$ then $V$ admits a
%   $D$-module structure. Then one can view it as a $\wt D$-module by 
%   inflation. \qed
%   (For one direction, use the inflation. For the other direction, use
%   a filtration $V=V_n\supset V_{n-1}\supset \dots \supset V_0=0$ so
%   that each $V_i/V_{i-1}$ is a $D$-module.)   
\end{proof}

% Lemmas~\ref{24} and \ref{25} yield the following result.
Now we are ready to prove our first main theorem.

\begin{thm}\label{27}
Let notations be as in Theorem~\ref{22}.
\begin{enumerate}
  \item For each $j$, write the maximal semi-simple quotient 
\[ (\Delta_j\otimes_F A^{\rm o})^{ss}=\prod_{k=1}^{t_j}
\Mat_{m_{jk}}(D_{jk}) \]
of $\Delta_j\otimes_F A^{\rm o}$ as the product of simple factors.
% where $A^{\rm o}$ is the opposite algbera
%of $A$. 
Then the set $\Hom_{\falg}(A,B)$ 
  is non-empty if and only if there are
  non-negative integers $x_{jk}$ for $j=1,\dots, r$ and $k=1,\dots, t_j$
  such that
  \begin{enumerate}
  \item $\sum_{k=1}^{t_j}x_{jk}=\dim_{\Delta_j} V_j$ for all $j$, and
  \item for all $j, k$, one has
 \[ \frac{m_{jk}[D_{jk}:F]}{[\Delta_j:F]} {\Big |}\, x_{jk}. \]
  \end{enumerate}

\item For each $j, i$, write the maximal semi-simple quotient 
\[ (\Delta_j\otimes_F A_i^{\rm o})^{ss}=\prod_{k=1}^{t_{ji}}
\Mat_{m_{jik}}(D_{jik}) \]
of $\Delta_j\otimes_F A_i^{\rm o}$ as the product of simple factors.
Then the set $\Hom^*_{\falg}(A,B)$ 
  is non-empty if and only if there are
  non-negative integers $x_{jik}$ for $j=1,\dots, r$, $i=1,\dots, s$ and 
  $k=1,\dots, t_{ji}$
  such that
  \begin{enumerate}
  \item $\sum_{i,k}x_{jik}=\dim_{\Delta_j} V_j$ for all $j$, 
  \item for all $j, i, k$, one has
 \[ \frac{m_{jik}[D_{jik}:F]}{[\Delta_j:F]} {\Big |}\, x_{jik}, \quad
 \text{and}\] 
   \item for all $i$, the sum $\sum_{j,k} x_{jik}$ is positive. 
  \end{enumerate}
%   one also has the further property that each direct sum 
%   $\oplus_{j=1}^r V_{ji}$ is non-zero for $i=1,\dots,s$. 
\end{enumerate}

% $A$ and $B$ be semi-simple algebras over $F$ and write 
% $B=\prod_{j=1}^r \End_{\Delta_j}(V_j)$ as in 
% Theorem~\ref{22}. Express the maximal 
% semi-simple quotient 
%\[ (\Delta_j\otimes_F A^{\rm o})^{ss}=\prod_{k=1}^{t_j}
% \Mat_{m_{jk}}(D_{jk}) \]
% as the product of simple factors. Then
%\begin{itemize}
% \item The set $\Hom_{falg}(A,B)$ is non-empty if and only if there are
%   non-negative integers $x_{jk}$ such that 
%   $m_{jk}[D_{jk}:F]/[\Delta_j:F]|x_{jk}$ and $\sum_{k}
%   x_{jk}=\dim_{\Delta_j} V_j.$
%\end{itemize}
\end{thm}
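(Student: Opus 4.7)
The plan is to combine Theorem~\ref{22}, Proposition~\ref{24}, and Lemma~\ref{25} to translate the existence of a homomorphism into the stated numerical conditions. For part (1), Theorem~\ref{22}(1) says $\Hom_\falg(A,B)\ne\emptyset$ amounts to equipping each $V_j$ with a right $\Delta_j\otimes_F A^{\rm o}$-module structure compatible with the prescribed $\Delta_j$-action (the decomposition of $V_j$ by simple factors of $A$ in loc.\ cit.\ is equivalent, since $A^{\rm o}=\prod_i A_i^{\rm o}$). First I would invoke Proposition~\ref{24} to write
\[ \Delta_j\otimes_F A^{\rm o}\simeq \prod_{k=1}^{t_j}\Mat_{m_{jk}}(\wt D_{jk}),\qquad (\wt D_{jk})^{ss}=D_{jk}, \]
which decomposes any module over this algebra canonically as $V_j=\bigoplus_k V_j^{(k)}$. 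Setting $x_{jk}:=\dim_{\Delta_j}V_j^{(k)}$ immediately yields condition (a) and reduces the problem to: does the $F$-vector space $V_j^{(k)}$ of $F$-dimension $x_{jk}[\Delta_j:F]$ admit a right $\Mat_{m_{jk}}(\wt D_{jk})$-module structure? Since $\Delta_j$ is a division ring, every right $\Delta_j$-module is free, so compatibility of the induced $\Delta_j$-action with the given one is automatic once the $\Delta_j$-dimensions match.

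Next I would apply Morita equivalence: right $\Mat_m(\wt D)$-modules correspond to right $\wt D$-modules via $V\mapsto Ve_{11}$, an equivalence that multiplies $F$-dimensions by $m$. Combined with Lemma~\ref{25}, this gives that $V_j^{(k)}$ is $\Mat_{m_{jk}}(\wt D_{jk})$-modulable iff $m_{jk}[D_{jk}:F]$ divides $x_{jk}[\Delta_j:F]$. To recast this in the form of condition (b), I would observe that the composition
\[ \Delta_j\hookrightarrow \Delta_j\otimes_F A^{\rm o}\twoheadrightarrow (\Delta_j\otimes_F A^{\rm o})^{ss}\to\Mat_{m_{jk}}(D_{jk}) \]
sends $1$ to $1$ and is therefore injective by simplicity of $\Delta_j$; hence $[\Delta_j:F]\mid m_{jk}[D_{jk}:F]$, and the divisibility is equivalent to $\bigl(m_{jk}[D_{jk}:F]/[\Delta_j:F]\bigr)\mid x_{jk}$.

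For part (2) I would repeat the argument factor-by-factor over $A=\prod_i A_i$: Theorem~\ref{22}(2) first produces the decomposition $V_j=\bigoplus_i V_{ji}$, Proposition~\ref{24} applied to each $\Delta_j\otimes_F A_i^{\rm o}$ refines it to $V_{ji}=\bigoplus_k V_{ji}^{(k)}$, and $x_{jik}:=\dim_{\Delta_j}V_{ji}^{(k)}$ yields conditions (a) and (b) by the same computation as in part (1). The extra condition (c), $\sum_{j,k}x_{jik}>0$ for each $i$, simply encodes the injectivity clause of Theorem~\ref{22}(2): the restriction of the associated $\varphi:A\to B$ to the simple factor $A_i$ is nonzero iff $\bigoplus_j V_{ji}\ne 0$, and nonzero implies injective since $A_i$ is simple. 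The step I expect to require the most care is the Morita-plus-dimension bookkeeping coupled with the integrality of the ratio $m_{jk}[D_{jk}:F]/[\Delta_j:F]$: without that integrality, condition (b) would not even parse as a divisibility statement, and the short embedding argument above is what secures it.
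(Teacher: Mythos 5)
Your proposal is correct and follows essentially the same route as the paper: reduce via Theorem~\ref{22} to asking whether each $V_j$ is $\Delta_j\otimes_F A^{\rm o}$-modulable (resp.\ $\Delta_j\otimes_F A_i^{\rm o}$-modulable), apply Proposition~\ref{24} to write the tensor algebra as $\prod_k\Mat_{m_{jk}}(\wt D_{jk})$, decompose $V_j$ accordingly, and finish with Lemma~\ref{25} plus Morita equivalence. The only addition beyond what the paper writes explicitly is your short argument securing the integrality of $m_{jk}[D_{jk}:F]/[\Delta_j:F]$; the paper leaves that implicit here (it states the analogous fact explicitly in Section~3), and spelling it out is a reasonable precaution.
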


\begin{proof}
(1) By Proposition~\ref{24}, we have 
\[ (\Delta_j\otimes_F A^{\rm o})=\prod_{k=1}^{t_j}
\Mat_{m_{jk}}(\wt D_{jk}) \]
for some Artinian $F$-algebras $\wt D_{jk}$ with 
$(\wt D_{jk})^{ss}=D_{jk}$. %central local Artinian extension of $D_{jk}$. 
By Theorem~\ref{22}, the set
$\Hom_{\falg}(A,B)$ is non-empty if and only if $V_j$ is
$\Delta_j\otimes A^{\rm o}$-modulable for all $j$. In this case there is a
decomposition of $\Delta_j$-submodules of $V_j$, 
\[ V_j=\bigoplus_{k=1}^{t_j} V_{jk}, \] 
such that each
$V_{jk}$ is $\Mat_{m_{jk}}(\wt D_{jk})$-modulable. This is equivalent
to, by Lemma~\ref{25}, that $m_{jk}[D_{jk}:F]\mid\dim_F V_{jk}$. 
Put $x_{jk}:=\dim_{\Delta_j}V_{jk}$. Then the integers $x_{jk}$ 
satisfy the
conditions (a) and (b). 

(2) By Proposition~\ref{24}, for each $i$ and $j$ we have
\[ \Delta_j\otimes A^{\rm o}_i=\prod_{k=1}^{t_{ji}}
\Mat_{m_{jik}}(\wt D_{jik}) \]
for some Artinian $F$-algebras $\wt D_{jik}$ with 
$(\wt D_{jik})^{ss}=D_{jik}$. 
%where each $\wt D_{jik}$  a central local Artinian
% extension of $D_{jik}$. 
Then as in (1) for each $j$ 
we have a decomposition
$V_j=\oplus_{i,k} V_{jik}$ of $\Delta_j$-submodules so that each $V_{jik}$
is a $\Mat_{m_{jik}}(\wt D_{jik})$-module and their sum $\oplus_k
V_{jik}$ forms a right $\Delta_j\otimes A_i^{\rm o}$-module. Put
$x_{jik}:=\dim_{\Delta_j}(V_{jik})$. As in (1), the integers $x_{jik} $
satisfy the conditions (a) and (b). Furthermore, a homomorphism $\varphi\in
\Hom_{\falg}(A,B)$ is injective if and only if each simple factor $A_i$ acts
faithfully on the linear spaces $\{V_{jik}\}_{j,k}$. The latter is
equivalent 
to the condition $\sum_{j,k} x_{jik}>0$ for all $i=1,\dots, s$. 
This proves the theorem. \qed   
\end{proof}

%\begin{prop}\label{27}
% Let $A$ and $B$ be semi-simple algebras and write 
%\[ A\otimes_F B\simeq \prod_{i=1}^t \Mat_{m_i}(\wt D_i)\] 
% as in Lemma~\ref{24}. 
% Then an $F$-vector space $V$ is $A\otimes_F B$-modulable if and only if 
% there is a
% decomposition of $V$ into subspaces, possibly zero-dimensional,
%\[ V=V_1\oplus \dots \oplus V_t \]
% such that 
%\[ m_i[D_i:F]\,|\dim_F V_i,\quad \forall\, i=1,\dots, t. \]
% ($If we write a semi-simple $F$-algebra $A$ into a product of 
% simple factors $A=\prod_{i=1}^s
% M_{n_i}(D_i)$, where $D_i$ is a division $F$-algebra, then
% a vector space $V$ is $A$-modulable if and only if 
%\end{prop}

% Note: Do not confuse the semi-simple algebras $A$ and $B$ in
% Proposition~\ref{26} 
% and the semi-simple algebras $A$ and $B$ in Theorem~\ref{12}. 
% Proposition~\ref{26} provides a general
% criterion for an $F$-vector space $V$ to be $A\otimes_F B$-modulable, 
% for arbitrary semi-simple $F$-algebras $A$ and $B$.
% Then we apply $A=\Delta_j$ and $B=A_i^{\rm o}$ to Proposition~\ref{26} and
% obtain a numerical criterion of non-emptiness of $\Hom_\falg(A,B)$ in
% Theorem~\ref{12}.  
% semi-simple algebras,  which gives more information

\subsection{Special cases}
We apply the general theorem (Theorem~\ref{27}) to the special case
where $B$ is a central simple $F$-algebra.
Recall \cite[p.~179, p.~253]{reiner:mo} 
the following definition for central simple algebras.

\begin{defn}\label{28}
The {\it degree}, {\it capacity}, and {\it index} of a
central simple algebra $B$ over $F$ are defined as
\[ \deg(B):=\sqrt{[B:F]},\quad \c(B):=m,\quad \i(B):=\sqrt{[\Delta:F]}, \]
if $B\cong \Mat_m(\Delta)$, where $\Delta$ is a division algebra over
$F$, which is uniquely determined by $B$ up to isomorphism.
The algebra $\Delta$ is also called the {\it division part} of $B$.
\end{defn}

\begin{thm}\label{29}
  Let $B=\Mat_n(\Delta)$ be a central simple algebra of $F$, where
  $\Delta$ is the division part of $B$. Let $A=\prod_{i=1}^s A_i$ be a
  semi-simple $F$-algebra and let $K_i$ be the center of $A_i$ for each
  $i$. Then there is an
  embedding of the $F$-algebra $A$ into $B$ if and only if there
  are positive integers $n_i$ for $i=1,\dots, s$ such that
\begin{equation}\label{eq:22}
  n=\sum_{i=1}^s n_i, \quad \text{and}
  \quad [A_i:F]\,\mid\, n_i c_i, \quad
  \forall\, i=1,\dots, s,
\end{equation}
where $c_i$ the capacity of the central
simple algebra $\Delta\otimes_F A_i^{\rm o}$ over $K_i$.
\end{thm}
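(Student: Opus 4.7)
The plan is to deduce this directly from Theorem~\ref{27}(2) by exploiting the fact that $B$ is central simple. Since $B = \Mat_n(\Delta)$ is already simple, in the notation of Theorem~\ref{27} we have $r=1$, $\Delta_1 = \Delta$, and $V_1$ can be taken to be $\Delta^n$ as a right $\Delta$-module, so $\dim_{\Delta_1} V_1 = n$. This eliminates the index $j$ entirely; the numerical data reduces to non-negative integers $x_{ik}$, one for each simple factor $A_i$ and each simple factor of $(\Delta\otimes_F A_i^{\rm o})^{ss}$.

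The key observation is that the tensor products $\Delta\otimes_F A_i^{\rm o}$ are already semi-simple. Indeed, $\Delta$ is central simple over $F$ and $A_i^{\rm o}$ is central simple over $K_i$, so $\Delta\otimes_F A_i^{\rm o} = (\Delta\otimes_F K_i)\otimes_{K_i} A_i^{\rm o}$ is a central simple $K_i$-algebra (the tensor product of two central simple algebras over the same base, after one base changes $\Delta$ from $F$ to $K_i$). In particular $t_{1i} = 1$ for every $i$, and writing $\Delta\otimes_F A_i^{\rm o} \simeq \Mat_{c_i}(D_i)$ with $D_i$ a division algebra over $K_i$, the numbers $m_{1i1} = c_i$ and $D_{1i1} = D_i$ are precisely those appearing in the statement.

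The last step is to translate the divisibility condition. Comparing $F$-dimensions in $\Mat_{c_i}(D_i) \simeq \Delta\otimes_F A_i^{\rm o}$ gives
\[
c_i^{\,2}\,[D_i:F] \;=\; [\Delta:F]\cdot[A_i:F],
\]
hence $\dfrac{c_i\,[D_i:F]}{[\Delta:F]} = \dfrac{[A_i:F]}{c_i}$. Setting $n_i := x_{1i1}$, condition (b) of Theorem~\ref{27}(2) reads $[A_i:F] \,\mid\, c_i n_i$, condition (a) becomes $\sum_i n_i = n$, and condition (c) ($\sum_{j,k} x_{jik} > 0$) becomes $n_i > 0$ since there is only one term. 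This recovers \eqref{eq:22} verbatim, so the equivalence in Theorem~\ref{27}(2) yields the theorem. There is no real obstacle here; the only thing to be careful about is the dimension bookkeeping over $F$ versus $K_i$ and the use of the fact (via Proposition~\ref{24} or directly) that $\Delta\otimes_F A_i^{\rm o}$ is semi-simple, so the ``lifted'' algebras $\wt D_{1i1}$ equal the division parts $D_i$ themselves.
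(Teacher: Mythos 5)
Your proof is correct and follows essentially the same route as the paper: reduce to Theorem~\ref{27}(2) with $r=1$, observe that $\Delta\otimes_F A_i^{\rm o}=(\Delta\otimes_F K_i)\otimes_{K_i}A_i^{\rm o}$ is central simple over $K_i$ so $t_{1i}=1$, and use the dimension count $c_i^2[D_i:F]=[\Delta:F][A_i:F]$ to turn the divisibility condition $\tfrac{c_i[D_i:F]}{[\Delta:F]}\mid n_i$ into $[A_i:F]\mid n_i c_i$. The extra remark that $\wt D_{1i1}=D_i$ because the tensor product is already semi-simple is a correct and worthwhile clarification, but the argument is otherwise the one in the paper.
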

\begin{proof}
  Write 
\[ \Delta\otimes_F A_i^{\rm o}=(\Delta\otimes_F K_i) \otimes_{K_i}
A_i^{\rm o}=\Mat_{c_i}(D_i) \]
and we have 
\begin{equation}
  \label{eq:23}
  [\Delta:F][A_i:F]=c_i^2 [D_i:F]. 
\end{equation}
By Theorem~\ref{27}, there exists an embedding of the $F$-algebra $A$ to
$B$ if and only if there are {\it positive} integers $n_i$ for $i=1,\dots,
s$ such that $n=\sum_{i=1}^s n_i$ and
\begin{equation}
  \label{eq:24}
  \frac{c_i[D_i:F]}{[\Delta:F]}\, {\Big |}\, n_i, \quad \forall\,
  i=1,\dots, s.
\end{equation}
Using (\ref{eq:23}), the condition (\ref{eq:24}) is equivalent to
$[A_i:F]\,|\, n_i c_i$. This proves the theorem. \qed  
\end{proof}

We apply Theorem~\ref{29} to the case where the semi-simple
algebra $A=K$ is commutative and obtain the following well-known
result (cf.   
\cite[Proposition 2.6]{prasad-rapinchuk:embed10} 
and \cite[Section 4]{chuard-koulmann-morales}).

\begin{cor}\label{210} 
  Let $B=\Mat_{n}(\Delta)$ be a central simple algebra over $F$ and
  $K=\prod_{i=1}^s  K_i$ is commutative semi-simple
  $F$-algebra. Assume that  $[K:F]=\deg(B)$. Then there exists an
  embedding of $K$ into $A$ if and only if each $K_i$ splits $B$.
\end{cor}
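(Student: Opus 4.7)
The plan is to apply Theorem~\ref{29} directly with $A=K=\prod_i K_i$. Since each $K_i$ is a field, one has $A_i = K_i$, the center is $K_i$ itself, and $K_i^{\rm o}=K_i$. Writing $d:=\i(B)=\deg(\Delta)$, the algebra $\Delta\otimes_F K_i^{\rm o}=\Delta\otimes_F K_i$ is a central simple $K_i$-algebra of degree $d$; let $c_i$ be its capacity, so that $c_i\mid d$. The whole point is to reinterpret the numerical criterion of Theorem~\ref{29} in terms of the splitting condition. Observe that $K_i$ splits $B$ (equivalently $\Delta$) exactly when $\Delta\otimes_F K_i\simeq \Mat_d(K_i)$, i.e.\ when $c_i=d$.

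Next I would record the degree identity
\[ \sum_{i=1}^s [K_i:F]=[K:F]=\deg(B)=nd, \]
which drives both directions.

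For the ``if'' direction, assume each $K_i$ splits $B$, so $c_i=d$ for all $i$. By the standard fact that the index of a central simple algebra divides the degree of any finite splitting field extension, $d\mid [K_i:F]$; set $n_i:=[K_i:F]/d\in \Z_{\ge 1}$. Then $\sum n_i=[K:F]/d=n$ and $n_ic_i=n_id=[K_i:F]$, so $[K_i:F]\mid n_ic_i$. Theorem~\ref{29} then supplies the desired embedding.

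For the ``only if'' direction, Theorem~\ref{29} provides positive integers $n_i$ with $\sum_i n_i=n$ and $[K_i:F]\mid n_ic_i$ for every $i$. The divisibility gives $n_ic_i\ge [K_i:F]$, and summing yields
\[ \sum_{i=1}^s n_ic_i\;\ge\;\sum_{i=1}^s[K_i:F]=nd=\sum_{i=1}^s n_id. \]
Combined with the inequalities $c_i\le d$, equality must hold termwise, forcing $c_i=d$ for each $i$; hence every $K_i$ splits $B$. There is no real obstacle here beyond invoking the index-divides-splitting-degree fact; the argument is a short bookkeeping exercise on top of Theorem~\ref{29}.
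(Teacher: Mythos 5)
Your proof is correct and follows the same strategy as the paper: apply Theorem~\ref{29}, then use the degree identity $[K:F]=\deg(B)=n\deg(\Delta)$ to force $c_i=\deg(\Delta)$ by a squeeze argument. The paper's own proof writes the squeeze chain
$[K:F]=\deg(B)=\sum_i n_i\deg(\Delta)\ge\sum_i n_ic_i\ge\sum_i[K_i:F]=[K:F]$
and stops there, leaving the converse (that $c_i=\deg(\Delta)$ for all $i$ actually produces admissible $n_i$) implicit; you make that direction explicit by invoking the fact that the index of $\Delta$ divides the degree of any splitting field $K_i$, so $n_i:=[K_i:F]/\deg(\Delta)$ is a positive integer and the divisibility and sum conditions hold. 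So your argument is the same in substance but slightly more complete than the version in the paper.
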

\begin{proof} 
  By Theorem~\ref{29}, the $F$-algebra $K$ can be embedded into $B$ if and only if
  there are positive integers $n_i$ for $i=1,\dots, s$ such that 
\begin{equation}\label{eq:25}
  n=\sum_{i=1}^s n_i, \quad \text{and}
  \quad [K_i:F]\,\mid\, n_i c_i, \quad
  \forall\, i=1,\dots, s,
\end{equation}
where $c_i=\c(\Delta\otimes_F K_i)$. We need to show that
$\deg(\Delta)=c_i$, i.e. $K_i$ splits $\Delta$ for all $i$. 
Since $[K:F]=\deg(B)$, we have
\[ [K:F]=\deg (B) = \sum_i n_i \deg(\Delta) \ge \sum_i n_i c_i \ge \sum_i
  [K_i:F]=[K:F]. \]
It follows that $c_i=\deg(\Delta)$ for each $i$.
\qed
\end{proof}

\section{$F$-algebra homomorphisms}
\label{sec:03}

%We keep the notations as before.
In this section we shall find a
necessary and sufficient condition for which 
the orbit set 
\begin{equation}
  \label{eq:31}
  \calO_{A,B}:=B^\times \backslash \Hom_\falg(A,B)
\end{equation}
is finite, where $A$ and $B$ are given semi-simple $F$-algebras. Then
we determine the cardinality $|\calO_{A,B}|$ when it is finite.

If we write $B=\prod_{j=1}^r B_j$ into simple factors, then one has
\begin{equation}
  \label{eq:32}
  \calO_{A,B}=\prod_{j=1}^r \calO_{A,B_j}.
\end{equation}
Therefore, we may and do assume that $B$ is
simple. Write $B=\End_{\Delta}(V)$, where $\Delta$ is a division
algebra over $F$. Write 
\begin{equation}
  \label{eq:33}
  \Delta\otimes_F A^{\rm o}\simeq \prod_{i=1}^t \Mat_{m_i}(\wt D_i)
\end{equation}
as in Proposition~\ref{24} and put 
\begin{equation}
  \label{eq:34}
  D_i:=(\wt D_i)^{ss}, \quad R_i:=Z(\wt D_i),\quad
Z_i:=Z(D_i)=(R_i)^{ss}. 
\end{equation}
Since there is an embedding of $\Delta$ into $\Mat_{m_i}(D_i)$, we have 
\[ [\Delta:F]\,\mid\, m_i [D_i:F]. \]
Put
\begin{equation}
  \label{eq:35}
  \ell_i:=m_i [D_i:F]/[\Delta:F],
\end{equation}
and 
\begin{equation}
  \label{eq:36}
  P(A,B):=\{(x_1,\dots, x_t)\in \Z^t_{\ge 0}\,\mid\, \dim_\Delta
V=\sum_{i=1}^t \ell_i x_i\, \}.
\end{equation}
By Theorem~\ref{27}, the orbit set $\calO_{A,B}$ is non-empty if
and only if there is a decomposition $V=\oplus_i V_i$ of
$\Delta$-submodules such that 
\begin{equation}
  \label{eq:37}
  \dim_\Delta V_i= \ell_i x_i, \quad \text{for some non-negative
  integers $x_i$.}
\end{equation}
or equivalently, the partition set $P(A,B)$ is non-empty.

\begin{defn}\label{31}
  Let $D$ be a division algebra over $F$ and $\wt D$ be an Artinian
  $F$-algebra with $\wt D^{ss}=D$.  %local Artinian extension of $D$. 
  For any non-negative integer $n$,
  denote by $\Mod(\wt D, n)$ the set of equivalent classes of $\wt
  D$-modules $W$ with $\length W= n$. 
\end{defn}
 
Note that $\length W= n$ if and only if $\dim_F W=n [D:F]$
(cf. Lemma~\ref{25}). 

% For separable $F$-algebras $A$ and $B$, we say that two $F$-algebra
% homomorphisms $\varphi_1, \varphi_2: A\to B$ are {\it equivalent} if
% there is an elment $g$ in $B^\times$ such that $\varphi_2=\Int (g)\circ
% \varphi_1$; $\varphi_1$ and $\varphi_2$ are said to be {\it weakly
%   equivalent} if there is an automorphism $\phi\in \Aut_F(B)$ such
% that $\varphi_2=\phi\circ \varphi_1$. Clearly, we have the long  exact
% sequence of groups:
%\[ 1\to Z^\times \to B^\times \to \Aut_F(B)\to \Aut_{F}(Z). \]

\begin{lemma}\label{32}
Let $\varphi_1,\varphi_2\in \Hom_\falg(A,\End_\Delta(V))$ be two maps.  
Let $W^{(i)}$ be the $(A,\Delta)$-bimodule on $V$ defined through
$\varphi_i$  
for $i=1,2$.  Then $\varphi_1$ and $\varphi_2$ are equivalent 
if and only if $W^{(1)}$
and $W^{(2)}$ are isomorphic as $(A,\Delta)$-bimodules.   
\end{lemma}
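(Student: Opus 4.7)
The plan is essentially to unpack the definitions on both sides and observe that the two conditions are literally translations of each other. The only thing one has to be careful about is which side the actions act on, and how the identification $B^\times = \Aut_\Delta(V)$ interacts with the composition convention in $\Int(b)\circ \varphi_1$.

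First I would set up notation: $V$ is a right $\Delta$-module and $B = \End_\Delta(V)$ acts on $V$ on the left, so $B^\times = \Aut_\Delta(V)$ (the group of $\Delta$-linear automorphisms). Given $\varphi_i\colon A \to B$, the bimodule $W^{(i)}$ is $V$ with its given right $\Delta$-structure and with left $A$-action defined by $a \cdot v := \varphi_i(a)(v)$. An $(A,\Delta)$-bimodule isomorphism $W^{(1)} \to W^{(2)}$ is by definition a $\Delta$-linear map $b\colon V \to V$ (hence an element of $B^\times$) satisfying $b(\varphi_1(a)(v)) = \varphi_2(a)(b(v))$ for all $a \in A$, $v \in V$.

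For the forward direction, assume $\varphi_2 = \Int(b)\circ \varphi_1$ for some $b \in B^\times$, i.e.\ $\varphi_2(a) = b\,\varphi_1(a)\,b^{-1}$ in $B$. Viewing $b$ as a $\Delta$-linear automorphism of $V$ and applying both sides to $b(v)$ shows $\varphi_2(a)(b(v)) = b(\varphi_1(a)(v))$, so $b\colon W^{(1)} \to W^{(2)}$ is an $(A,\Delta)$-bimodule isomorphism. Conversely, if $b\colon W^{(1)} \isoto W^{(2)}$ is such an isomorphism, then $b \in \Aut_\Delta(V) = B^\times$ and the intertwining relation rearranges to $\varphi_2(a) = b\,\varphi_1(a)\,b^{-1}$ in $\End_\Delta(V) = B$, so $\varphi_2 = \Int(b)\circ \varphi_1$.

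There is no real obstacle here; the lemma is a bookkeeping statement turning the conjugation equivalence relation on $\Hom_\falg(A,B)$ into isomorphism of bimodules, which is exactly what is needed to count $|\calO_{A,B}|$ using the classification of $\Delta\otimes_F A^{\rm o}$-modules in Proposition~\ref{24}. The only thing to flag is that both statements use the \emph{same} $b$, so no choice or lifting is involved, and the equivalence is strictly formal.
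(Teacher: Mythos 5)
Your proof is correct and matches the paper's proof essentially line for line: both directions consist of observing that the intertwining relation $g\circ\varphi_1(a)=\varphi_2(a)\circ g$ for a $\Delta$-linear automorphism $g=b$ of $V$ is the same thing as $\varphi_2=\Int(b)\circ\varphi_1$ in $B=\End_\Delta(V)$. No substantive difference.
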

\begin{proof}
  If $W^{(1)}$ and $W^{(2)}$ are isomorphic, then there is a
  $\Delta$-linear automorphism $g:V\to V$ such that the
  following diagram 
\[ 
\begin{CD}
  V @>{g}>> V \\
  @VV\varphi_1(a)V  @VV\varphi_2(a)V \\
  V @>{g}>> V \\
\end{CD} \]
commutes for all $a\in A$. Therefore, $\varphi_2=\Int(g)\circ \varphi_1$.
%where $g=(g_1,g_2,\dots,g_r)$. 
Conversely, if we are given $g$
such that $\varphi_2=\Int(g)\circ \varphi_1$, then the map $g:V\to V$
is an $(A,\Delta)$-linear isomorphisms from $W^{(1)}$ to
$W^{(2)}$. \qed
\end{proof}
%(Should we replace the condition by $\varphi_2=\phi_B \varphi_1
%\phi_A$ for some automorphisms $\phi_A\in \Aut_F(A)$ and $\phi_B\in
%\Aut_F(B)$, to be checked). 
Equivalently, the condition in Lemma~\ref{32} says that $W^{(1)}$ and
$W^{(2)}$ are isomorphic as right $\Delta\otimes_F A^{\rm o}$-modules. 
%One can easily check this condition by the
%following elementary lemma. 
%\begin{lemma}\label{24}
%   Let $C$ be a semi-simple $F$-algebras, and write $C=\prod_{i=1}^s C_i$
%   into simple factors. Then two finite right $C$-modules $V$ and $W$
%   are isomorphic if and only if 
%\[ \dim_F V_i=\dim_F W_i, \]
% for all $i=1,\dots, s$, where $V_i$ and
% $W_i$ are the simple factors of $V$ and $W$, respectively, with
% respect to the decomposition $C=\prod_i C_i$.  
%\end{lemma}
% If we write 
%\begin{equation}
%  \label{eq:21}
%   \Delta_j\otimes_F A^{\rm o}=\prod_{k} C_{jk}
%\end{equation}
% into simple factor, then $W^{(1)}_j$
% and $W^{(2)}_j$ are isomorphic as $(A,\Delta_j)$-bimodules 
% if and only if $\dim_F W_{(1)}_{jk}=\dim_F W^{(2)}_{jk}$ 
% for all $i$, where $V_i$ and
% $W_i$ are the simple factors of $V$ and $W$, respectively, with
% respect to the decomposition $A=\prod_i A_i$.  

Using Lemma~\ref{32} and the discussion above we obtain the following
result. Note following from (\ref{eq:35}) and (\ref{eq:37}) that 
$V_i$ is a right $\Mat_{m_i}(\wt D_i)$-module with 
\[ \dim_{F} V_i=[D_i:F] m_i x_i. \]
Using the Morita equivalence, the module $V_i=W_i^{\oplus m_i}$
determines uniquely an element $W_i\in \Mod(\wt D_i,x_i)$. 

\begin{thm}\label{33} Notations being as above.
  There is a bijection between the orbit set $\calO_{A,B}$ and 
  \begin{equation}
    \label{eq:38}
   \coprod_{(x_1,\dots, x_t)\in P(A,B)} \prod_{i=1}^t \Mod(\wt D_i,x_i). 
  \end{equation}
\end{thm}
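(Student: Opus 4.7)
\proof[Proof plan]
The plan is to translate the orbit problem into a module-isomorphism problem via Lemma~\ref{32}, then unwind the product decomposition~\eqref{eq:33} together with Morita equivalence. By Lemma~\ref{32}, sending $\varphi\in\Hom_{\falg}(A,\End_\Delta(V))$ to the right $\Delta\otimes_F A^{\rm o}$-module structure $W_\varphi$ it induces on $V$ identifies $\calO_{A,B}$ with the set of isomorphism classes of right $\Delta\otimes_F A^{\rm o}$-modules whose underlying right $\Delta$-module is isomorphic to $V$. Since $\Delta$ is a division algebra, the latter condition is equivalent to the numerical condition $\dim_\Delta W=\dim_\Delta V$.

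Next, using the decomposition~\eqref{eq:33}, any right $\Delta\otimes_F A^{\rm o}$-module $W$ splits canonically as $W=\bigoplus_{i=1}^t W_i'$, where $W_i'$ is a right $\Mat_{m_i}(\wt D_i)$-module and the summands are cut out by the central idempotents of $\prod_i\Mat_{m_i}(\wt D_i)$. Conversely, any collection of right $\Mat_{m_i}(\wt D_i)$-modules assembles into a right $\Delta\otimes_F A^{\rm o}$-module, and isomorphisms on each factor combine to isomorphisms of the whole. So the classification reduces, componentwise, to classifying $\Mat_{m_i}(\wt D_i)$-modules up to isomorphism.

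Now I would invoke Morita equivalence between $\wt D_i$ and $\Mat_{m_i}(\wt D_i)$: the functor $W_i\mapsto W_i^{\oplus m_i}$ (with the standard matrix action) gives a bijection between $\Mod(\wt D_i,x_i)$ and isomorphism classes of $\Mat_{m_i}(\wt D_i)$-modules $W_i'$ with $\length_{\wt D_i}W_i'=x_i$, equivalently (by Lemma~\ref{25} applied to $\wt D_i$) $\dim_F W_i'=m_i x_i[D_i:F]$. Translating this via $\dim_\Delta=\dim_F/[\Delta:F]$ and the definition~\eqref{eq:35} of $\ell_i$, we get $\dim_\Delta W_i'=\ell_i x_i$, so the constraint $\dim_\Delta W=\dim_\Delta V$ becomes exactly
\[ \sum_{i=1}^t \ell_i x_i=\dim_\Delta V, \]
i.e.\ $(x_1,\dots,x_t)\in P(A,B)$.

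Assembling the two steps produces the claimed bijection
\[ \calO_{A,B}\;\longleftrightarrow\;\coprod_{(x_1,\dots, x_t)\in P(A,B)} \prod_{i=1}^t \Mod(\wt D_i,x_i). \]
The only nontrivial point I expect to have to justify carefully is that Morita equivalence still applies to the non-semisimple Artinian ring $\wt D_i$ (i.e.\ that $\Mat_{m_i}(\wt D_i)$-modules $W_i'$ are always of the form $W_i^{\oplus m_i}$ and that $\length_{\wt D_i}W_i=x_i$ is well-defined); this is standard but should be cited explicitly, and together with the dictionary between $\length$ and $\dim_F$ from Lemma~\ref{25} it closes the argument.
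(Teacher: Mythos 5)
Your proposal is correct and follows essentially the same route as the paper: translate $\calO_{A,B}$ via Lemma~\ref{32} into isomorphism classes of right $\Delta\otimes_F A^{\rm o}$-modules with the prescribed $\Delta$-dimension, split along the decomposition of Proposition~\ref{24} by central idempotents, and invoke Morita equivalence between $\Mat_{m_i}(\wt D_i)$ and $\wt D_i$ to land in $\Mod(\wt D_i,x_i)$ with the constraint $\sum_i\ell_i x_i=\dim_\Delta V$ defining $P(A,B)$. Your closing remark about verifying Morita equivalence for the non-semisimple Artinian rings $\wt D_i$ is a reasonable point of care, but the paper treats it as standard and does not elaborate either.
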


Theorem~\ref{33} tells us that 
\begin{enumerate}
\item the orbit set $\calO_{A,B}$ is non-empty if and only if so is
  the set $P(A,B)$; 
\item the orbit set $\calO_{A,B}$ is finite if and only if each set 
  $\Mod(\wt D_i,x_i)$ is finite for 
  all $i=1,\dots, t$ and for all $(x_1,\dots, x_t)\in P(A,B)$;
\item if $\calO_{A,B}$ is finite, then 
  \begin{equation}
    \label{eq:39}
   |\calO_{A,B}|=\sum_{(x_1,\dots, x_t)\in P(A,B)} \prod_{i=1}^t
|\Mod(\wt D_i, x_i)|. 
  \end{equation}
\end{enumerate}

It remains to determine when a set of the form $\Mod(\wt D,x)$ is
finite, and to compute 
its cardinality  $|\Mod(\wt D,x)|$ if it is so.
By definition, one has $|\Mod(\wt D, 0)|=1$.

\begin{prop}\label{34} Let $D$ be a division algebra over $F$, and
  let $\wt D$ be an Artinian $F$-algebra with 
  $(\wt D)^{ss}=D$. Let $R:=Z(\wt D)$ and $Z:=R/\grm_R$ be the residue
  field of $R$. 
% central local Artinian  
%  extension of a division algebra $D$ over $F$.
\begin{enumerate}
  \item The set $\Mod(\wt D, 1)$ is finite and $|\Mod(\wt D, 1)|=1$.
  \item If the ground field $F$ is finite, then the set $\Mod(\wt D,
    x)$ is finite.
%    $\calO_{A,B}$ is finite.
  \item If $\grm_R=0$, then $\wt D=D$ and $|\Mod(\wt D, x)|=1$. 
  \item Suppose $\dim_Z \grm_R/\grm_R^2=1$. Let $e$ be the smallest
    positive integer such that $\grm_R^e=0$.
% \wt D\ simeq D[\epsilon]/(\epsilon^e)$ for some
%    positive integer $e$, 
Then for any positive integer $x$, 
   the set
    $\Mod(\wt D, x)$ is finite and
  $|\Mod(\wt D, x)|$ is equal to the number $p(x,e)$ of partitions
  $x=c_1+\dots+c_r$, for some $r\in\bbN$, 
  of $x$ with each part $1\le c_i\le e$.   
\end{enumerate}
\end{prop}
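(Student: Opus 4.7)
The unifying observation is that $(\wt D)^{ss}=D$ gives $\wt D$ a unique simple module up to isomorphism, namely $D$ itself (pulled back along $\wt D\twoheadrightarrow D$, since $\rad(\wt D)$ annihilates every simple). Part (1) is then immediate: a $\wt D$-module of length one is simple, and there is only one isomorphism class. For part (2), if $F$ is finite then so is the $F$-algebra $\wt D$, and any $\wt D$-module of length $x$ has cardinality exactly $|D|^x$ (each composition factor is isomorphic to $D$); hence there are only finitely many such modules as sets, and a fortiori finitely many isomorphism classes.

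For part (3), assume $\grm_R=0$, so $R=Z$ is a field. My plan is to invoke the Azumaya structure of $\wt D$ over $R$ inherited from Proposition~\ref{24}: the algebra $\Mat_m(\wt D)$ arises there as a tensor product of two Azumaya $R$-algebras, and Morita equivalence transfers the Azumaya property to $\wt D$. With $R$ a field, Azumaya coincides with central simple, so $\wt D$ is a central simple $Z$-algebra. A central simple algebra has zero Jacobson radical, so $\wt D=(\wt D)^{ss}=D$. Then every $D$-module of length $x$ is $D^{\oplus x}$, and $|\Mod(D,x)|=1$.

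For part (4), assume $\dim_Z\grm_R/\grm_R^2=1$. Then $R$ is a local Artinian principal ideal ring with $\grm_R=(\pi)$ and $\pi^e=0$; by the elementary divisors theorem, every finitely generated $R$-module is of the form $\bigoplus_j R/(\pi^{c_j})$ with $1\le c_j\le e$, uniquely determined by its partition $(c_j)$. My plan is to establish a length-preserving bijection $\Mod(\wt D,x)\leftrightarrow \Mod(R,x)$ sending an $R$-module $M$ of length $x$ to the $\wt D$-module $D\otimes_Z M$ of the same length. This uses the structural identification $\wt D\simeq D\otimes_Z R$ (from the Azumaya property of $\wt D$ over $R$ plus a coefficient-ring lift $Z\hookrightarrow R$), combined with the fact that since $D$ is a central simple $Z$-algebra, the category of $D\otimes_Z R$-modules is equivalent via Morita to the category of $R$-modules, with length preserved (the simple $\wt D$-module $D$ corresponding to the simple $R$-module $Z$). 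Counting partitions of $x$ into parts of size at most $e$ then yields $|\Mod(\wt D,x)|=p(x,e)$.

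The principal technical obstacle is justifying the structural identification $\wt D\simeq D\otimes_Z R$, or equivalently the length-preserving bijection above. Two ingredients are needed, both naturally available in the setting of Proposition~\ref{24}: the Azumaya property of $\wt D$ over its center $R$, which forces $\wt D$ to be an $R$-form of the central simple algebra $D$, and the existence of a coefficient subring $Z\hookrightarrow R$ realizing this form as the honest tensor product (a truncated Cohen-type statement in equal characteristic; in mixed characteristic a Morita-theoretic replacement using the Azumaya classification over Artinian local rings is needed). Once this is in hand, the remainder of part (4) reduces to the standard classification of finitely generated modules over a truncated discrete valuation ring.
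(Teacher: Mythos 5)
Your arguments for parts (1), (2), and (3) are sound, though they take different routes from the paper's terse treatment (the paper declares (1) and (3) ``clear,'' and for (2) counts $F$-algebra homomorphisms into $\Mat_c(F)$ with $c=[D:F]x$ rather than counting module structures on a finite underlying set; both are valid). For (3), your invocation of the Azumaya property of $\wt D$ over $R$ from the construction of Proposition~\ref{24} is the right structural reason, though the paper does not make it explicit.

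The problem is in part (4). The claim that ``the category of $D\otimes_Z R$-modules is equivalent via Morita to the category of $R$-modules'' is false when $D$ is a nonsplit division algebra over $Z$: the relevant bimodule $\wt D = D\otimes_Z R$ is a progenerator as a right $R$-module, but $\End_R(\wt D)\cong\Mat_{d^2}(R)$ where $d=\deg D$, which is not $\wt D$ unless $D$ is split. So $M\mapsto D\otimes_Z M$ is a functor, not an equivalence. Injectivity on isomorphism classes can still be rescued (e.g.\ by Krull--Schmidt applied to the underlying $R$-modules, since $D\otimes_Z M\cong M^{\oplus d^2}$ as $R$-modules), but \emph{surjectivity} --- that every finite-length $\wt D$-module has the form $D\otimes_Z M$ --- is exactly the structure theorem for $\wt D$-modules you are trying to prove, so the argument is circular. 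The fix is the paper's more elementary and more direct route: pick a central uniformizer $\pi\in\grm_R\subset R\subset\wt D$ with $\pi^e=0$ and $\wt D/(\pi)=D$, and classify finitely generated $\wt D$-modules by the same induction/diagonalization argument that classifies modules over the commutative truncated DVR $F[\pi]/(\pi^e)$ (centrality of $\pi$, nilpotence, and the fact that $\wt D/(\pi)$ is a division algebra are all that is needed; no identification $\wt D\cong D\otimes_Z R$ and no coefficient-field lift are required). This gives $\Mod(\wt D,x)\leftrightarrow\{$partitions of $x$ with parts $\le e\}$ directly. Incidentally, your worry about mixed characteristic is moot here: $R$ is an $F$-algebra containing the residue field $Z$ by the construction of Proposition~\ref{24}, so a coefficient field always exists --- but as just noted, you do not actually need it.
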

\begin{proof}
  (1) It is clear. (2) This follows from the finiteness of 
\[ \Hom_\falg(\wt D,\Mat_c(F))\subset \Hom_{\text{$F$-lin}}(\wt
D,\Mat_c(F))=\Mat_{\dim \wt D \times c^2} (F), \] 
where $c:=[D:F]x$. (3) It is clear.

(4) Choose a generator $\pi\in \grm_R$ so that $\pi\in R\in \wt D$, 
    $\pi^e=0$ and $\wt D/\pi \wt D=D$. Every finite
      $\wt D$-module is 
      isomorphic to 
\[ \wt D/(\pi^{c_1})\oplus
      \wt D/(\pi^{c_2})\oplus \cdots \oplus
      \wt D/(\pi^{c_r}), \quad \text{for some $r\in \bbN$},\] 
where $1\le c_1\le \dots \le c_r$ are integers with each
      $ c_i\le e$. The proof is similar to that for the classification
      of finite $F[\pi]/(\pi^e)$-modules. We leave the details to the
      reader. This completes the proof of the proposition. \qed
\end{proof}
 
% Let $\wt D$ be as in Proposition~\ref{34} and put $R:=Z(\wt D)$ and
% $Z:=Z(D)$. Let $\grm_R$ denote the maximal ideal of the local ring $R$. 
% Note that $Z$ is the residue field of $R$.
%and
%let $k:=R/\grm_R$ be the residue field of $R$. 
% Then $\wt D\simeq D[\epsilon]/(\epsilon^e)$ for some positive
% integer $e$ if and only if $\dim_Z \grm_R/\grm_R^2\le 1$. In this
% case, the set $\Mod(\wt D, x)$ is finite as shown in Lemma~\ref{34}
% (3). The following takes care of the other case.

\begin{prop}\label{35} Let $\wt D$, $D$, $R$, $Z$, and $\grm_R$ be as
  in Proposition~\ref{34}. If 
  \begin{enumerate}
  \item[(i)] $\dim_Z \grm_R/\grm_R^2\ge 2$,
  \item [(ii)] the ground field $F$ is infinite, and 
  \item [(iii)] the integer $x\ge 2$, 
  \end{enumerate}
then the set $\Mod(\wt D, x)$ is infinite.
\end{prop}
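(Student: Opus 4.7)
The plan is to produce, for each $x\ge 2$, an infinite family of pairwise non-isomorphic $\wt D$-modules of length $x$. I would first handle $x=2$ by constructing a family of indecomposable length-$2$ modules parametrized by $Z$, and then extend to larger $x$ by direct-summing with copies of the unique simple $\wt D$-module $D$. The Krull--Schmidt theorem applies to the category of finite-length $\wt D$-modules, so if $\{M_t\}_{t\in Z}$ are pairwise non-isomorphic indecomposable length-$2$ modules, the modules $M_t\oplus D^{\oplus(x-2)}$ are pairwise non-isomorphic of length $x$ (since $M_t$ has length $2$ and $D$ has length $1$, no length-$2$ indecomposable summand is a copy of $D$).

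For the length-$2$ construction, use hypothesis (i) to pick central elements $\pi_1,\pi_2\in\grm_R$ whose images in $\grm_R/\grm_R^2$ are $Z$-linearly independent; by hypothesis (ii), the residue field $Z$ (a finite extension of the infinite field $F$) is itself infinite. For each $t\in Z$ I aim to build a length-$2$ module $M_t$ in which the actions of $\pi_1$ and $\pi_2$ are both nonzero but related by ``$\pi_1$ acts as $t\cdot\pi_2$'' (with $t$ acting through the $Z$-scalar action on the socle and head). Concretely one realizes $M_t=D\oplus D$ with its natural right $D$-structure, declares $\pi_2$ to act by $(d_1,d_2)\mapsto(d_2,0)$ and $\pi_1$ by $(d_1,d_2)\mapsto(td_2,0)$, and extends to all of $\wt D$ using $\rad(\wt D)^2\cdot M_t=0$ together with the bimodule structure of $\rad(\wt D)/\rad(\wt D)^2$ (setting the other generators of $\rad/\rad^2$ to act trivially on $M_t$).

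Non-isomorphism of the $M_t$ follows from an annihilator argument. Suppose $M_t\cong M_{t'}$ as $\wt D$-modules with $t\ne t'\in Z$. Because any $\wt D$-isomorphism intertwines the action of every $F$-linear combination of $\pi_1,\pi_2$ and scalars, the $F$-linear operators $\pi_1-t\pi_2$ and $\pi_1-t'\pi_2$ both vanish on the common module, hence so does their difference $(t-t')\pi_2$. Since $t-t'$ acts invertibly on the subquotients (via the nonzero $Z$-scalar $t-t'\in Z^\times$), this forces $\pi_2$ to annihilate $M_t$, contradicting the explicit non-trivial action in the construction. Thus $\{M_t\}_{t\in Z}$ is an infinite family of pairwise non-isomorphic length-$2$ modules, and the proposition follows.

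The main obstacle is the consistency of the length-$2$ construction for a general Artinian $\wt D$ that need not admit a Wedderburn--Malcev splitting $D\hookrightarrow\wt D$. In the separable case, where $\wt D\simeq D\otimes_Z R$, extending the prescribed $\pi_i$-action to all of $\wt D$ is automatic. In the general case I would either reduce to this setting by passing to a suitable two-sided quotient $\wt D/J$ that preserves hypothesis (i) while simplifying the structure (for instance by killing $\rad(\wt D)^2$ together with all but two generators of $\grm_R/\grm_R^2$, so as to arrive at something close to $D\otimes_Z Z[\pi_1,\pi_2]/(\pi_1,\pi_2)^2$), or work intrinsically via the identification $\operatorname{Ext}^1_{\wt D}(D,D)\simeq\Hom_D(\rad(\wt D)/\rad(\wt D)^2,D)$, after first verifying that $\bar\pi_1,\bar\pi_2$ remain right-$D$-linearly independent in $\rad(\wt D)/\rad(\wt D)^2$.
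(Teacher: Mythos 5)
Your strategy matches the paper's in its main ideas: pass to a quotient of $\wt D$ to simplify the structure, build an infinite family of length-$2$ modules parametrized by an infinite field, pad with copies of the simple $D$ to reach length $x$, and distinguish modules by their annihilators. The difference lies in the construction of the length-$2$ modules, and this is where your proposal has a genuine gap that you yourself flag at the end.

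You define $M_t$ on the underlying space $D\oplus D$ ``with its natural right $D$-structure'' and prescribe the $\pi_i$-action by hand. But $\wt D$ need not contain $D$ as a subalgebra -- the Wedderburn--Malcev theorem requires $D$ to be separable over $F$, and this paper is explicitly set up to handle inseparable situations. So there is no ``natural right $D$-structure'' on your candidate module inherited from $\wt D$, and the prescribed $\pi_i$-action is a priori only defined on a hypothetical lift of $D$ plus two elements; you still have to check it extends to a genuine $\wt D$-module structure consistent with all the relations in $\wt D$. You name two possible repairs (pass to a quotient killing $\rad^2$ and extra generators, or compute $\operatorname{Ext}^1_{\wt D}(D,D)$), but neither is carried out, and the Ext route in particular would need the identification of $\operatorname{Ext}^1$ with $\Hom_D(\rad/\rad^2, D)$ to be established in this (possibly inseparable) generality. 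The paper's proof sidesteps the whole issue by a cleaner device: it uses Cohen's theorem to write $R\simeq Z[x_1,\dots,x_n]$ with $n\ge 2$, passes to the quotient $R_1=Z[X_1,X_2]/(X_1,X_2)^2$, sets $\wt D_1 := \wt D\otimes_R R_1$, and then takes the \emph{cyclic} module $M_a := \wt D_1/(x_1+ax_2)\oplus D^{\oplus(x-2)}$ for $a\in F$. A cyclic module carries its $\wt D_1$-structure automatically -- there is nothing to extend -- and the annihilator $\Ann(M_a)=\wt D_1(x_1+ax_2)$ is computed directly, so $a\ne b$ gives $M_a\not\simeq M_b$. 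Two smaller remarks: your invocation of Krull--Schmidt is unnecessary once you have the annihilator argument (the paper does not use it), and your parametrization by $t\in Z$ rather than $a\in F$ works but requires invoking Cohen's theorem anyway to realize $Z$ as a subring of $R$, so it buys nothing over the paper's choice of $F$.
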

\begin{proof} 
\def\Ann{{\rm Ann}} 
By Cohen's theorem, one can write 
\[ R\simeq Z[x_1, \dots, x_n]=Z[X_1,\dots,X_n]/(f_j(X))_j, 
\quad n\ge 2,  \]
with every equation $f_j(X)\in (X_1,\dots,X_n)^2$. Then $R$ admits a
quotient 
\[ R_1\simeq Z[x_1,x_2]=Z[X_1,X_2]/(X_1^2,X_1X_2,X_2^2). \]
Put $\wt D_1:=\wt D\otimes_R R_1$, which is a quotient of $\wt D$. 
The inflation operation gives the inclusion map 
$\Mod(\wt D_1,x)\subset \Mod(\wt D, x)$. Therefore,  
it suffices to show that
the set $\Mod(\wt D_1,x)$ is infinite. We shall construct infinitely many
non-isomorphic $\wt D_1$-modules $M$ in $\Mod(\wt D_1,x)$. 
View $D$ as a $\wt D_1$-module by inflation. For any element $a\in F$,
put
\[ M_a:=\wt D_1/(x_1+ax_2)\oplus D^{\oplus x-2}. \] 
It is clear that  $\dim_F M_a=[D:F]x$ 
and that the annihilator $\Ann(M_a)=\wt
D_1 (x_1+ax_2)$. For $a,b\in F$, if $M_a\simeq M_b$ then
$\Ann(M_a)=\Ann(M_b)$ and hence $a=b$. This shows that if $F$ is
infinite then there are infinitely many non-isomorphic $\wt D_1$-modules
in $\Mod(\wt D_1,x)$. \qed
\end{proof}

We refine our main theorem
(Theorem~\ref{33}) by Propositions~\ref{34} and \ref{35} as follows.

\begin{thm}\label{36} Let $A$ be a semi-simple $F$-algebra and $B$ a
  simple $F$-algebra.
  Let $\wt D_i$, $D_i$, $R_i$, $Z_i$ and $P(A,B)$ be as above. 
  
\begin{enumerate}
\item The orbit set $\calO_{A,B}$ is infinite if and only if there is
  an element $(x_1,\dots, x_t)\in P(A,B)$ such that 
  \begin{equation}
    \label{eq:310}
    \dim_{Z_i} \grm_{R_i}/\grm_{R_i}^2\ge 2\quad\text{and}\quad x_i\ge
    2  
  \end{equation}
for some $i\in \{1,\dots, t\}$.
\item Suppose the orbit set $\calO_{A,B}$ is finite, that is, for
  every element $(x_1,\dots, x_t)\in P(A,B)$, one has either
  $\dim_{Z_i} \grm_{R_i}/\grm_{R_i}^2\le 1$ or $x_i\le 1$ for all
  $i\in \{1,\dots, t\}$.
  Then % we have the formula %(cf. (\ref{eq:39}))
  \begin{equation*}
    %\label{eq:39}
   |\calO_{A,B}|=\sum_{(x_1,\dots, x_t)\in P(A,B)} \prod_{i=1}^t
|\Mod(\wt D_i, x_i)|.  
  \end{equation*} 
Moreover, 
%where $\Mod(\wt D_i,x_i)$ is the set of isomorphism classes of 
%$\wt D_i$-modules $W$ with $\length W=x_i$. 
%The cardinality $|\Mod(\wt D_i, x_i)|$ is given by the following
%formula:
\begin{equation}
  \label{eq:311}
  |\Mod(\wt D_i, x_i)|=
  \begin{cases}
    1 & \text{if $x_i\le 1$}, \\
    p(x_i,e_i) & \text{if $x_i>1$ and $\dim_{Z_i}
    \grm_{R_i}/\grm_{R_i}^2=1$}
%    \wt D_i\simeq
%    D[\epsilon]/(\epsilon^{e_i})$ for some $e_i\in \bbN$}, 
  \end{cases}
\end{equation}
where $e_i$ is the smallest positive integer such that
$\grm_{R_i}^{e_i}=0$, and 
$p(x,e)$ denotes of number of all partitions $x=c_1+\dots +c_s$
of $x$ with each part $c_i\le e$. 
\end{enumerate}
\end{thm}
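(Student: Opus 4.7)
The plan is to derive Theorem~\ref{36} by assembling Theorem~\ref{33} with Propositions~\ref{34} and \ref{35}. By Theorem~\ref{33} there is a bijection
\[ \calO_{A,B}\;\longleftrightarrow\;\coprod_{(x_1,\dots,x_t)\in P(A,B)}\prod_{i=1}^t \Mod(\wt D_i,x_i), \]
so finiteness and cardinality of $\calO_{A,B}$ are governed by the sets $\Mod(\wt D_i,x_i)$ together with the indexing set $P(A,B)$. A first reduction I would record: $P(A,B)$ is always finite, since its elements are lattice points $(x_1,\dots,x_t)\in\Z_{\ge 0}^t$ on the bounded affine hyperplane $\sum_i \ell_i x_i=\dim_\Delta V$ (each $\ell_i$ being a positive integer). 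Consequently $\calO_{A,B}$ is finite if and only if every factor $\Mod(\wt D_i,x_i)$ appearing in the coproduct is finite.

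For part (1), I would feed this equivalence into Proposition~\ref{35} (for the infiniteness direction) and Proposition~\ref{34}(1), (3), (4) (for the finiteness direction). Assuming $F$ is infinite, the combination shows that $\Mod(\wt D_i,x_i)$ is infinite precisely when $\dim_{Z_i}\grm_{R_i}/\grm_{R_i}^2\ge 2$ and $x_i\ge 2$, and finite otherwise. Running this factor by factor over the finite set $P(A,B)$ then produces the numerical criterion stated in (1).

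For part (2), the formula follows by summing products of cardinalities across $P(A,B)$. The explicit values of $|\Mod(\wt D_i,x_i)|$ come directly from Proposition~\ref{34}: for $x_i=0$ one has $|\Mod(\wt D_i,0)|=1$ by convention, for $x_i=1$ Prop.~\ref{34}(1) gives $1$, and for $x_i>1$ with $\dim_{Z_i}\grm_{R_i}/\grm_{R_i}^2=1$ Prop.~\ref{34}(4) gives $p(x_i,e_i)$. The degenerate case $\grm_{R_i}=0$ is absorbed into this formula, since then $e_i=1$ and $p(x_i,1)=1$, in agreement with Prop.~\ref{34}(3).

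The only genuine subtlety—and really the sole obstacle worth flagging—is the ground-field hypothesis. When $F$ is finite, Prop.~\ref{34}(2) makes every $\Mod(\wt D_i,x_i)$ finite, so $\calO_{A,B}$ is unconditionally finite; the ``only if'' direction of (1) therefore tacitly assumes $F$ infinite, while part (2) covers exactly the situation in which each $\Mod(\wt D_i,x_i)$ admits the explicit cardinality provided by Prop.~\ref{34}(1), (3), (4), whose proofs require no infinitude hypothesis on $F$.
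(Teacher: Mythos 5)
Your proposal reproduces the paper's argument. The paper gives no separate proof of Theorem~\ref{36}; it simply introduces the theorem with the line ``We refine our main theorem (Theorem~\ref{33}) by Propositions~\ref{34} and \ref{35} as follows,'' and you have correctly unpacked exactly that: the bijection of Theorem~\ref{33}, the finiteness of $P(A,B)$ (correctly observed as lattice points on a bounded hyperplane, since each $\ell_i\geq 1$), Proposition~\ref{35} for the infiniteness when $\dim_{Z_i}\grm_{R_i}/\grm_{R_i}^2\geq 2$ and $x_i\geq 2$, and Proposition~\ref{34}(1),(3),(4) for the explicit counts otherwise. Two small points of comparison. First, your closing discussion of the ground-field hypothesis is slightly more labored than the paper's treatment: the paper dispatches it with the single remark that $\dim_{Z_i}\grm_{R_i}/\grm_{R_i}^2\geq 2$ can only happen when $F$ is infinite (for $F$ finite the centers $K_i$ and $Z(\Delta)$ are separable, so $K_i\otimes_F Z(\Delta)$ is \'etale and each $R_i$ is a field), which makes both sides of the iff in part~(1) automatically false over a finite field and renders Proposition~\ref{35}'s infinitude hypothesis harmless; you reach the same conclusion but frame it as a tacit assumption in one direction. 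Second, when you say the degenerate case $\grm_{R_i}=0$ is ``absorbed'' into the displayed formula, note that strictly as written the second branch of \eqref{eq:311} requires $\dim_{Z_i}\grm_{R_i}/\grm_{R_i}^2=1$, not $\leq 1$; your reading ($e_i=1$, $p(x_i,1)=1$) is of course what the formula should give and agrees with Proposition~\ref{34}(3), so this is a stylistic gap in the theorem's case split rather than a flaw in your reasoning.
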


%\begin{enumerate}
% \item The orbit set $\calO_{A,B}$ is infinite if and only if there is
%   an element $(x_1,\dots, x_t)\in P(A,B)$ such that 
%  \begin{equation}
%    \label{eq:310}
%     \dim_{Z_i} \grm_{R_i}/\grm_{R_i}^2\ge 2\quad\text{and}\quad x_i\ge 2 
%  \end{equation}
% for some $i\in \{1,\dots, t\}$.
% \item If the orbit set $\calO_{A,B}$ is finite, then we have the
%   formula (cf. (\ref{eq:39}))
%  \begin{equation*}   %\label{eq:39}
%    |\calO_{A,B}|=\sum_{(x_1,\dots, x_t)\in P(A,B)} \prod_{i=1}^t
% |\Mod(\wt D_i, x_i)|,  
%  \end{equation*} 
%where the number $|\Mod(\wt D_i, x_i)|$ is given by the following
%formula
%\begin{equation}
%  \label{eq:311}
%   |\Mod(\wt D_i, x_i)|=
%  \begin{cases}
%     1 & \text{if $x_i\le 1$}, \\
%     p(x_i,e_i) & \text{if $x_i>1$ and $\wt D_i\simeq
%     D[\epsilon]/(\epsilon^{e_i})$ for some $e_i\in \bbN$}. 
%  \end{cases}
%\end{equation}
%\end{enumerate}
%\end{thm}

Note that the condition $\dim_{Z_i} \grm_{R_i}/\grm_{R_i}^2\ge 2$ in
(\ref{eq:310})  can
occur only when $F$ is infinite. The general case where $B$ is
semi-simple can be reduced to the simple case as Theorem~\ref{36} 
by (\ref{eq:32}). 
 
As an immediate consequence of Theorem~\ref{36}, the following
  result improves the  main results of F.~Pop and H.~Pop in 
  \cite{pop:ns}.
 
\begin{cor}\label{37} 
  Let $A$ and $B$ be semi-simple $F$-algebras. 
  %notations and conditions be as in Theorem~\ref{36}. 
  Assume that $A$ or $B$ is separable over $F$. Then the orbit set 
  $\calO_{A,B}$ is finite and $|\calO_{A,B}|=|P(A,B)|$. 
\end{cor}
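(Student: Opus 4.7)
The plan is to apply Theorem~\ref{36} after observing that the separability hypothesis makes the tensor product $\Delta\otimes_F A^{\rm o}$ semi-simple, which collapses all the local Artinian structure supplied by Proposition~\ref{24}.

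First I would reduce to the case where $B$ is simple. By (\ref{eq:32}) one has $\calO_{A,B}=\prod_j\calO_{A,B_j}$, and the natural definition of the partition set factors as $P(A,B)=\prod_j P(A,B_j)$; moreover $B$ is separable over $F$ iff each simple factor $B_j$ is separable, since $Z(B)=\prod_j Z(B_j)$ is \'etale over $F$ iff each $Z(B_j)$ is. Thus it suffices to establish $|\calO_{A,B_j}|=|P(A,B_j)|$ for each $j$, and we may assume $B=\End_\Delta(V)$ is simple with $\Delta$ a division $F$-algebra.

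The main step is to show that $\Delta\otimes_F A^{\rm o}$ is semi-simple. If $A$ is separable over $F$, so is $A^{\rm o}$. If instead $B$ is separable, then since $Z(B)=Z(\Delta)$ is \'etale and $\Delta$ is simple, $\Delta$ itself is separable over $F$. In either case we are tensoring a separable $F$-algebra with a semi-simple one. Writing the semi-simple factor in Wedderburn form $\prod_k \Mat_{n_k}(\Delta_k)$ with $\Delta_k$ division, it suffices to check that each algebra $S\otimes_F\Delta_k$ is semi-simple, where $S$ denotes the separable factor. Using $S\otimes_F\Delta_k=(S\otimes_F Z(\Delta_k))\otimes_{Z(\Delta_k)}\Delta_k$, this follows from two standard facts: $S\otimes_F K$ is semi-simple for any field extension $K/F$ (\cite[\S7]{reiner:mo}), and tensoring a simple algebra over its center with a central simple algebra over the same center yields a simple algebra.

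With $\Delta\otimes_F A^{\rm o}$ semi-simple, Proposition~\ref{24} forces $\wt D_i=D_i$ for every $i$, whence $R_i=Z_i$ is a field and $\grm_{R_i}=0$. The ``infinite'' alternative of Theorem~\ref{36}(1) therefore cannot occur, so $\calO_{A,B}$ is finite. By Proposition~\ref{34}(3), $|\Mod(\wt D_i,x_i)|=|\Mod(D_i,x_i)|=1$ for every $i$ and every $x_i\geq 0$. Substituting into the formula in Theorem~\ref{36}(2) collapses product and sum to
\[
|\calO_{A,B}|=\sum_{(x_1,\dots,x_t)\in P(A,B)}\prod_{i=1}^t 1=|P(A,B)|.
\]
The only substantive point is the semi-simplicity of $\Delta\otimes_F A^{\rm o}$; the rest is bookkeeping with the machinery developed in Sections 2 and 3.
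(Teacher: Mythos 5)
Your proof is correct and takes exactly the route the paper has in mind when it calls Corollary~3.7 an ``immediate consequence'' of Theorem~3.6: reduce to $B$ simple, observe that separability of $A$ (or of $\Delta$, forced by separability of $B$) makes $\Delta\otimes_F A^{\rm o}$ semi-simple so that $\wt D_i=D_i$ and $\grm_{R_i}=0$, and then read off finiteness and $|\Mod(\wt D_i,x_i)|=1$ from Proposition~3.4(3). The one step you rightly flag as substantive, semi-simplicity of a separable algebra tensored with a semi-simple one, is argued cleanly via the base change $S\otimes_F\Delta_k=(S\otimes_F Z(\Delta_k))\otimes_{Z(\Delta_k)}\Delta_k$ and the fact that a simple algebra tensored over a central subfield with a central simple algebra is simple; this matches the spirit of the reference to \cite[\S7]{reiner:mo} and needs no further justification.
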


We have defined the set $P(A,B)$ when $B$ is simple. When $B$ is
semi-simple, if we write $B=\prod_{j=1}^r B_j$ into simple factors, 
then the set $P(A,B)$ is defined as
\[ P(A,B):=\prod_{j=1}^r P(A,B_j). \]

\section{Some applications}
\label{sec:04}

In this section, we give a few applications of the results in the
previous sections. 

\subsection{Characteristic polynomials of central simple algebras}
\label{sec:41}

Let $A$ be a (f.d.) central simple algebra over an arbitrary base field
$F$. Let $A=\End_{\Delta}(V)=\End_n(\Delta)$, where $\Delta$ is a
central division algebra over $F$, $V$ is a right $\Delta$-vector space
of dimension. 
For any element $x\in A=\Mat_n(\Delta)$, the {\it characteristic
    polynomial of $x$} is defined to be the characteristic polynomial of
  the image of $x$ in $\Mat_{nd}(\bar F)$ under a map
\[ A\to A\otimes_F {\bar F} \stackrel{\rho}{\simeq} \Mat_{nd}(\bar
F), \]
where $\bar F$ is an algebraic closure of $F$ and $d$ is the degree of
$\Delta$. This polynomial is independent of
the choice of the isomorphism $\rho$ and it is defined over $F$. We
call a monic polynomial of degree $nd$ is a characteristic polynomial
of $A$ if it is the characteristic polynomial of some element in $A$. 
A natural question is to determine whether a given polynomial is a
characteristic polynomial of $A$. We have the following result. 

\begin{thm}\label{41}
  Let $f(t)\in F[t]$ be a monic polynomial of degree $nd$ and let
  $f(t)=\prod_{i=1}^s p_i(t)^{a_i}$ be the factorization into
  irreducible polynomials. Put $F_i:=F[t]/(p_i(t))$. Then $f(t)$ is a
  characteristic polynomial of $A$ if and only if for all $i=1,\dots,
  s$, one has
  \begin{itemize}
  \item [(a)] $a_i\, \deg p_i(t)=n_i
    d$ for some positive integer $n_i$, and
  \item [(b)] $[F_i:F]\mid n_i\cdot \c(\Delta\otimes_F F_i)$. 
  \end{itemize}
\end{thm}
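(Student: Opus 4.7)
The plan is to reduce the problem to applying Theorem~\ref{29} to each field extension $F_i$ separately, with the element $x$ assembled from block pieces of a decomposition of a right $\Delta$-module $V$ satisfying $A = \End_\Delta(V)$.

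\textbf{Sufficiency.} Suppose positive integers $n_i$ satisfying (a) and (b) exist. Condition (b) is precisely the numerical criterion of Theorem~\ref{29} applied to the simple $F$-algebra $F_i$ (a single field) and the central simple algebra $\Mat_{n_i}(\Delta)$, so there exists an $F$-algebra embedding $\varphi_i\colon F_i \hookrightarrow \Mat_{n_i}(\Delta)$ for each $i$. Setting $x_i := \varphi_i(\bar t_i)$, where $\bar t_i$ is the class of $t$ in $F_i$, gives an element with minimal polynomial $p_i$, whose reduced characteristic polynomial (of degree $n_i d = a_i \deg p_i$) must then be $p_i^{a_i}$. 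Decompose $V = \bigoplus_i V_i$ with $\dim_\Delta V_i = n_i$, identify $A_i := \End_\Delta(V_i)$, and let $x \in A$ be the block-diagonal endomorphism with blocks $x_i$. Passing to $\bar F$, the block decomposition becomes a decomposition of the simple $A \otimes \bar F$-module into $x$-stable pieces, so $\chi_x = \prod_i \chi_{x_i} = \prod_i p_i^{a_i} = f$.

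\textbf{Necessity.} Conversely, suppose $x \in A$ has $\chi_x = f$. Since $x$ is $\Delta$-linear, $V$ acquires a right $\Delta \otimes_F F[x]$-module structure, and the primary decomposition of $V$ under $F[x]$ gives $V = \bigoplus_i V_i$ into right $\Delta$-submodules, each $V_i$ preserved by $x$ and with $x|_{V_i}$ having minimal polynomial a power of $p_i$. Setting $n_i := \dim_\Delta V_i$ and $x_i := x|_{V_i} \in A_i := \End_\Delta(V_i) = \Mat_{n_i}(\Delta)$, the reduced characteristic polynomial $\chi_{x_i}$ is a power of $p_i$ of degree $n_i d$, and matching with $\chi_x = \prod_i \chi_{x_i} = \prod_i p_i^{a_i}$ yields $n_i d = a_i \deg p_i$, which is (a). For (b), $V_i$ is a right module over the Artinian $F$-algebra $\Delta \otimes_F F[x_i]$, whose maximal semisimple quotient is $\Delta \otimes_F F_i \cong \Mat_{c_i}(D_i)$; by Proposition~\ref{24} this Artinian algebra has the form $\Mat_{c_i}(\wt D_i)$ with $(\wt D_i)^{ss} = D_i$. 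Morita equivalence transfers $V_i$ to a right $\wt D_i$-module $W_i$ of $F$-dimension $n_i d^2/c_i$, and Lemma~\ref{25} then gives $[D_i:F] \mid n_i d^2/c_i$. Using the identity $[D_i:F_i] = d^2/c_i^2$ this simplifies to $\deg p_i \mid n_i c_i$, which is (b).

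\textbf{Main obstacle.} The conceptual key is recognizing that condition (b) matches exactly the embedding criterion of Theorem~\ref{29} applied to the single field $F_i$; after this, each direction is a combination of the primary decomposition with divisibility bookkeeping via Proposition~\ref{24}, Lemma~\ref{25}, and Morita equivalence. The genuine technical care required is in handling the Artinian, non-semisimple algebra $\Delta \otimes_F F[x_i]$ when the minimal polynomial has $p_i$-exponent $b_i > 1$, together with the dimension identity $[D_i:F] = d^2 \deg p_i / c_i^2$ used to convert the conclusion of Lemma~\ref{25} into the precise form of condition (b).
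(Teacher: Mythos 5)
Your proof follows essentially the same route as the paper's sketch: reduce to the primary factors $p_i^{a_i}$ via block/primary decomposition and invoke Theorem~\ref{29} for each $F_i$, with the divisibility in (b) matching the embedding criterion for $F_i \hookrightarrow \Mat_{n_i}(\Delta)$. The sufficiency direction is correct, including the observation that the reduced characteristic polynomial of $\varphi_i(\bar t)$, having degree $n_i d$ and the same irreducible factors as the minimal polynomial $p_i$, must be $p_i^{a_i}$.

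There is one technical gap in the necessity direction: you cite Proposition~\ref{24} for $\Delta \otimes_F F[x_i]$, but that proposition requires both tensor factors to be semi-simple, and $F[x_i]\cong F[t]/(p_i^{b_i})$ is not semi-simple when the $p_i$-exponent $b_i$ of the minimal polynomial exceeds one. The conclusion you want (that $\Delta\otimes_F F[x_i]\cong \Mat_{c_i}(\wt D_i)$) is still true, because $\Delta$ is Azumaya over $F$, so the radical of $\Delta\otimes_F F[x_i]$ is $\Delta$ tensored with the radical of $F[x_i]$ and the lifting-of-idempotents argument in the proof of Proposition~\ref{24} applies verbatim with $R=F[x_i]$; you should note this rather than invoke the proposition as a black box. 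Alternatively, you can bypass the structure theorem and Morita equivalence entirely: filter $V_i$ by the powers of the radical of $\Delta\otimes_F F[x_i]$; each graded quotient is a module over $\Delta\otimes_F F_i\cong \Mat_{c_i}(D_i)$, so $c_i[D_i:F]$ divides $\dim_F V_i=n_i d^2$, and using $[D_i:F]=(d^2/c_i^2)[F_i:F]$ this unwinds directly to $[F_i:F]\mid n_i c_i$, which is (b). With either repair the argument is complete and matches the paper's intended strategy.
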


The idea of the proof is to reduce first the case where $f(t)$ is a
power of an irreducible polynomial. More precisely, one can show that 
$f(t)$ is a characteristic polynomial if and only if   
\begin{itemize}
  \item [(a)] $a_i\, \deg p_i(t)=n_i
    d$ for some positive integer $n_i$, and
  \item [(c)] each $p_i(t)^{a_i}$ is a characteristic polynomial of
    $\Mat_{n_i}(\Delta)$. 
\end{itemize}
Then one can show that the condition (c) is equivalent to that the
field extension $F_i$ can be embedded into $\Mat_{n_i}(\Delta)$. Then
we use Theorem~\ref{29} to show that this is equivalent to (b). The
details can be found in \cite{yu:charpoly}.

We remark that when $F$ is a non-Archimedean local field, the condition (b)
can be replaced by the following simpler condition
\begin{itemize}
\item [(d)] $\deg p_i(t) \mid n_i \gcd(d, \deg p_i(t))$.
\end{itemize}
This follows from the formula 
$\inv_{F_i}(\Delta\otimes_F F_i)=\inv_F(\Delta)[F_i:F]$, where
$\inv_F(\Delta)$ is the invariant of $\Delta$.  

\subsection{Endomorphism algebras of QM abelian surfaces}
\label{sec:42}

We can apply the embedding result to 
determine all possible endomorphism algebras
of abelian surfaces with quaternion multiplication (QM). 
Let $D$ be an indefinite quaternion division algebra 
over the field $\Q$ of rational
numbers. It is interesting to 
find out all $\Q$-algebras $E$ containing $D$ which appear as endomorphism
algebras of abelian surfaces. In other words, we would like to know
which endomorphism algebra appears in the Shimura curve $X_D$
associated to the quaternion algebra $D$ (and with additional
data).
Studying whether or not a semi-simple algebras over $\Q$ can appear as
the endomorphism algebra of an abelian variety is a way to understand
structures of abelian varieties. See Oort \cite{oort:endo} for
detail discussions and extensive information for this problem. 

\begin{thm}\label{42}
  Let $D$ be an indefinite quaternion division algebra over $\Q$, and
  let $A$ be an abelian surface over a field $k$ 
  with quaternion multiplication by $D$,
  i.e. an abelian surface together with a $\Q$-algebra 
  embedding $\iota: D\to E:=\End^0(A):=\End(A)\otimes_{\Z} \Q$. 
%whose endomorphism algebra
%  $E:=\End^0(A)$ contains $D$. 
  \begin{itemize}
  \item [(1)] Suppose that $A$ is not simple. Then $A$ is isogenous to
    $C^2$ for an elliptic curve $C$ and the algebra 
    $E$ is isomorphic to one of the
    following
    \begin{itemize}
    \item [(i)] $\Mat_2(K)$, where $K$ is any imaginary quadratic field
    which splits $D$,
    or
    \item [(ii)] $\Mat_2(D_{p,\infty})$, where $D_{p,\infty}$ is the
    quaternion algebra over $\Q$ ramified exactly at $\{p,\infty\}$. 
    This occurs if and only if $C$ is a
    supersingular elliptic curve over the base field $k\supset \F_{p^2}$. 
    \end{itemize}
    \item [(2)] Suppose that $A$ is simple. Then 
   \begin{itemize}
  \item[(i)] $E\simeq D$, or
  \item[(ii)] $E\simeq D_K:=D\otimes_\Q K$ 
    for some imaginary quadratic field $K$. In this case, 
    the abelian surface $A$ is in \ch $p>0$ for some prime
    $p$ and it is supersingular. 
  \end{itemize}
  \end{itemize}
\end{thm}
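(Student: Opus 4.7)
The plan is to first reduce via Poincar\'e reducibility to the cases $A\sim C^2$ (not simple) or $A$ simple, and then to determine which $\Q$-algebras containing $D$ can appear as $\End^0(A)$, by combining the embedding criteria of Section~\ref{sec:02} with Albert's classification of endomorphism algebras.

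For the non-simple case, $\dim A=2$ together with Poincar\'e's reducibility gives $A\sim C_1\times C_2$ with $C_1,C_2$ elliptic curves. If $C_1\not\sim C_2$, then $E=\End^0(C_1)\times \End^0(C_2)$ and the simple algebra $D$ must embed into one factor; but $[\End^0(C_i):\Q]\le 4$, with equality only for $\End^0(C_i)\cong D_{p,\infty}$ which is definite, contradicting the indefiniteness of $D$ by comparison of Hasse invariants at $\infty$. Hence $C_1\sim C_2=:C$ and $E=\Mat_2(\End^0(C))$. I then case-split on $\End^0(C)$: the possibility $\End^0(C)=\Q$ is ruled out since $\Mat_2(\Q)$ contains no division $\Q$-subalgebra of dimension $4$; for $\End^0(C)=K$ imaginary quadratic, Corollary~\ref{210} shows $D\hookrightarrow \Mat_2(K)$ iff $K$ splits $D$, giving case (1)(i); for $\End^0(C)=D_{p,\infty}$ (which forces $C$ supersingular over $k\supseteq\F_{p^2}$), Theorem~\ref{29} with $n=2$, $\Delta=D_{p,\infty}$, $A_1=D$ applies, and the capacity $c_1$ of $D_{p,\infty}\otimes_{\Q}D^{\rm o}$ is at least $2$ because the invariants of $D_{p,\infty}$ and $D$ at $\infty$ cancel, forcing the index to divide $2$; so $4\mid 2c_1$ and $D$ embeds, yielding case (1)(ii).

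For the simple case, $E$ is a finite-dimensional division $\Q$-algebra containing $D$. Using Albert's classification for simple abelian surfaces one obtains $[E:\Q]\le 8$, hence $[E:D]\le 2$. If $[E:D]=1$ then $E=D$, which is case (2)(i); otherwise the center $K=Z(E)$ must be a quadratic field (the possibility $K=\Q$ is excluded because $8$ is not a square, and $[K:\Q]=4$ is excluded likewise), and the double-centralizer theorem together with $D\otimes_\Q K\subseteq E$ of the same $K$-dimension $4$ forces $E\cong D\otimes_\Q K$, where $K$ does not split $D$. The Rosati positivity on $E$ places $E$ into Albert's type~IV, so $K$ must be a CM field, i.e.\ imaginary quadratic. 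It remains to show that in characteristic zero no simple abelian surface has endomorphism algebra of type IV of $\Q$-dimension $8$, while in characteristic $p>0$ this occurs precisely for supersingular $A$ --- this yields case (2)(ii).

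The main obstacle is the last step of the simple case: ruling out $E=D\otimes_\Q K$ in characteristic zero and establishing that in positive characteristic it occurs exactly for supersingular $A$. The tools of Sections~\ref{sec:02}--\ref{sec:03} only concern abstract algebra embeddings, so one must additionally invoke the constraint on the rational (Hodge) representation in characteristic zero, and Honda--Tate together with the Dieudonn\'e-module description of supersingular abelian varieties in positive characteristic.
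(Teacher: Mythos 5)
The paper does not actually prove Theorem~\ref{42}: immediately after Theorem~\ref{43} the authors state that ``The proofs and details will be published elsewhere,'' so there is no in-paper argument to compare yours against. Your plan --- Poincar\'e reducibility to split into $A\sim C^2$ versus $A$ simple, the embedding criteria of Section~\ref{sec:02} to determine which algebras $\Mat_2(\End^0(C))$ can receive $D$, and Albert's classification (with its characteristic-$p$ modifications) together with Honda--Tate to constrain the simple case --- is the natural route and is, in outline, sound. You also correctly isolate the genuinely hard step, namely ruling out $[E:\Q]=8$ for type~IV in characteristic zero and identifying the characteristic-$p$ instances with supersingular surfaces; that step really does require input beyond the pure algebra-embedding theory.

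A few points in the write-up deserve tightening. In the non-simple case with $C_1\not\sim C_2$, a \emph{unital} homomorphism from the simple algebra $D$ into $\End^0(C_1)\times\End^0(C_2)$ projects injectively to \emph{each} factor (not merely ``one factor''), since the projection is still unital and its kernel is a proper two-sided ideal; the contradiction you draw is unchanged, but the phrasing is off. The citation of Corollary~\ref{210} for $D\hookrightarrow\Mat_2(K)$ is misplaced: that corollary concerns commutative $A$; what you want is Theorem~\ref{22} (or \ref{27}) applied with $\Delta_1=K$ and $V_1=K^2$, which gives exactly $D\hookrightarrow\Mat_2(K)\iff K$ splits $D$ via the index of $K\otimes_\Q D^{\rm o}$. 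Finally, the justification that $\c\bigl(D_{p,\infty}\otimes_\Q D^{\rm o}\bigr)\ge 2$ ``because the invariants cancel at $\infty$'' is wrong: since $D$ is indefinite, $\inv_\infty(D)=0$ and $\inv_\infty(D_{p,\infty}\otimes D^{\rm o})=1/2$, so nothing cancels there. The correct (and simpler) reason is that the tensor product of two quaternion algebras has all local invariants in $\{0,1/2\}$, hence index dividing $2$, hence capacity at least $2$; Theorem~\ref{29} then gives $4\mid 2c_1$ as you wanted. With those corrections and with the deferred inputs (Albert's dimension bounds in both characteristics, and Honda--Tate/Dieudonn\'e theory for the supersingular identification) supplied in full, the argument would be a complete proof of the statement.
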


The algebra $D$ (the case (2) (i)) can  occur as we can take a
generic complex abelian surface with QM by $D$. 
Recall that an abelian
variety in \ch $p>0$ is said to be {\it supersingular} if it is
isogenous to a product of supersingular elliptic curves over a finite
field  extension. 
The case (ii) of
Theorem~\ref{11} (2) can occur only when the quaternion algebra $D$
satisfies certain conditions. In this case, the algebra $E$ is
determined by its center $K$, and there are only a finite list 
of possibilities for such $K$. 
More precisely, we have the following result.

\begin{thm}\label{43}
  Let $A$ be a simple supersingular
  abelian surface over a finite field $\F_q$ of \ch $p>0$ with
  quaternion multiplication by $D$. Let $E:=\End^0(A)$ and $S$ be the
  discriminant of $D$. Then  
%the endomorphism algebra of a  such that
%  $D\subset E$. Then 
  \begin{itemize}
  \item [(1)] The center $K$ of $E$ is isomorphic 
    to $\Q(\zeta_n)$ for $n=3, 4$, or $6$.
  \item [(2)] $p\mid S$ and $p \equiv 1 \pmod n$, where $n$ is as
    above, and for any other prime $\ell \mid S$, one has either
    $\ell | n$ or $\ell \equiv -1 \pmod n$, that is, $\ell$ does not
    split in the quadratic field $\Q(\zeta_n)$.
  \item [(3)] $E\simeq D\otimes_\Q K$.   
  \end{itemize}  
\end{thm}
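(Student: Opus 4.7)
The plan is to combine two independent computations of the local Brauer invariants of the endomorphism algebra $E$---one coming from the Frobenius $\pi$ via Tate's theorem on abelian varieties over finite fields, and one coming from the structure $E \simeq D\otimes_\Q K$ forced by Theorem~\ref{42}---and then to use supersingularity to pin down $K$.

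First I would invoke Theorem~\ref{42}(2) to classify $E$. Case (i), $E \simeq D$, can be ruled out at once: the Frobenius $\pi \in E$ would have to lie in the center $\Q$ of $D$, so $\pi = \pm\sqrt{q}$, but then $\pi$ acts as a scalar on the $\ell$-adic Tate module and $A$ is isogenous to a square of an elliptic curve, contradicting the simplicity of $A$ over $\F_q$. Hence case (ii) applies, which gives (3) directly and identifies $K = Z(E) = \Q(\pi)$ as an imaginary quadratic field. Next I would apply Tate's description of the invariants of $E$: they vanish away from $p$ and $\infty$, vanish at the (complex) archimedean places of $K$, and at a place $w\mid p$ they equal $(v_w(\pi)/v_w(q))\,[K_w:\Q_p] \pmod \Z$. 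Since $A$ is supersingular, $v_w(\pi)/v_w(q)=1/2$ at every $w\mid p$. On the other hand, the base-change formula for Brauer invariants gives $\inv_w(D\otimes_\Q K) = [K_w:\Q_\ell]\cdot\inv_\ell(D) \pmod \Z$ for $w\mid\ell$, which is nonzero only when $\ell\in S$ and $\ell$ splits in $K$.

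Matching the two formulas for $\inv_w(E)$ should yield (2). At any $\ell \in S$ with $\ell\neq p$, Tate forces $\inv_w(E)=0$, so $[K_w:\Q_\ell]$ must be even, i.e.\ $\ell$ is inert or ramified in $K$; for $K \simeq \Q(\zeta_n)$ this is precisely $\ell \mid n$ or $\ell \equiv -1 \pmod n$. Simplicity of $A$ forces $E$ to be a division algebra, so some local invariant must be nonzero; this can only occur above $p$, which in turn requires $[K_w:\Q_p]$ odd, so $p$ splits in $K$ and (since otherwise $D_K$ would already be split at all places above $p$) $p\in S$.

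Finally I would use this splitting to identify $K$ itself. Writing $q=p^f$, supersingularity gives $v_\grp(\pi)=f/2$ at each of the two places $\grp,\bar\grp$ above $p$; so $f$ is even and $(\pi) = \grp^{f/2}\bar\grp^{f/2} = (p^{f/2})$ as ideals in $\O_K$. Consequently $u := \pi/p^{f/2}$ lies in $\O_K^{\times}$, and the identification $K=\Q(\pi)=\Q(u)$ rules out $u\in\Q$. The classical classification of units in imaginary quadratic fields leaves only $K \simeq \Q(i)=\Q(\zeta_4)$ or $K \simeq \Q(\sqrt{-3})=\Q(\zeta_3)=\Q(\zeta_6)$, establishing (1); and the standard splitting law $p\equiv 1 \pmod n \iff p$ splits in $\Q(\zeta_n)$ then completes (2). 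The main obstacle is the reconciliation of the two invariant computations, in particular the verification that $p\in S$ and that $p$ must split in $K$---these are precisely the constraints that force $K$ into the very short list in (1). Everything else reduces to standard unit theory in imaginary quadratic fields and standard splitting laws in quadratic subfields of cyclotomic fields.
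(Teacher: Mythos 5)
The paper itself does not give a proof of Theorem~\ref{43}; it states explicitly that ``the proofs and details will be published elsewhere.'' So there is no internal proof to compare against, and I have to judge your argument on its own terms. It is essentially correct, and it uses what one would expect to be the canonical route: Theorem~\ref{42} to reduce to the case $E\simeq D\otimes_\Q K$ with $K$ imaginary quadratic, Honda--Tate and Tate's invariant formula to compute $\inv_w(E)$, the Brauer base-change formula $\inv_w(D\otimes_\Q K)=[K_w:\Q_\ell]\inv_\ell(D)$ to compare, and unit theory in imaginary quadratic fields to pin $K$ down to $\Q(\zeta_3)$ or $\Q(\zeta_4)$.

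A few places where you should tighten the exposition. (a) The exclusion of $E\simeq D$ is cleanest via Honda--Tate's dimension formula $2\dim A=[E:\Q(\pi)]^{1/2}[\Q(\pi):\Q]$: if $\pi\in\Q$ and $E$ is a quaternion algebra over $\Q$, the right side is $2$, forcing $\dim A=1$, a contradiction; your phrasing ``$\pi$ acts as a scalar, so $A\sim C^2$'' is morally right but conceals that the isogeny to $C^2$ comes from the uniqueness in Honda--Tate, not merely from the scalar action on the Tate module. (b) When you write $v_w(\pi)/v_w(q)=1/2$ at every $w\mid p$, spell out that this is the characterization of supersingularity via Newton slopes; it is the crucial use of the supersingular hypothesis. (c) In the final step you should note that $u=\pi/p^{f/2}$ is an algebraic integer whose conjugates all have absolute value $1$, hence a root of unity; then $K=\Q(u)\ne\Q$ forces $u$ to be a primitive $n$-th root with $n\in\{3,4,6\}$, and the value of $n$ in statements (1)--(2) is exactly the order of $u$, not merely a label for the field (which is why $n=3$ and $n=6$ appear as distinct cases even though $\Q(\zeta_3)=\Q(\zeta_6)$). (d) Since $\phi(n)=2$ for $n\in\{3,4,6\}$, ``does not split'' is indeed equivalent to ``$\ell\mid n$ or $\ell\equiv -1\pmod n$,'' so your translation of the inertness/ramification condition into the congruence condition of statement (2) is correct. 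With these clarifications the argument is complete.
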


Theorem~\ref{43} states that there are three possibilities for
endomorphism algebras $E$ of simple supersingular abelian surfaces
over finite fields: $E\simeq D\otimes_\Q \Q(\zeta_n)$ for $n=3,4, 6$. 
However, not all of them occur; it depends on the quaternion algebra $D$.
The algebra $D\otimes_\Q \Q(\zeta_n)$ occurs if and only if there is
exactly one prime $p\mid S$ such that $p\equiv 1 \pmod n$. 

These results contribute (Theorems~\ref{42} and \ref{43})
new cases to the problem of semi-simple algebras appearing 
as endomorphism algebras of abelian varieties as mentioned above (see
Oort~\cite{oort:endo}). The proofs and
details will be published elsewhere. \\
%See Oort \cite{oort:endo}
%and the references therein for quite complete discussions of this
%problem.  

In the last part of this section, we add some details to the
local-global principle for embeddings of fields in simple algebras in 
Section~\ref{sec:01}. 

\subsection{Embeddings over global fields}\label{sec:43}
We let the base field $F$ be a global field. 
Let $A$ be a central simple algebra over $F$ and let $K$ be a finite
field extension of $F$ with $k:=[K:F]\mid \deg(A)$. Consider the
local-global principle for embeddings of $K$ in $A$. That is, if there
exists an embedding of $K_v:=K\otimes_F F_v$ in $A_v:=A\otimes_F F_v$
for all places $v\in V^F$, does there exist an embedding of $K$ in
$A$? When $K$ has the maximal degree, the answer is yes and this is a
useful result.  

We use the following notations

\begin{itemize}

\item $A=\End_\Delta(V)$, where $\Delta$ is the division part of $A$,
  and $V$ is a finite right $\Delta$-module of rank $n$.

\item $k:=[K:F]$ and $\delta:=\deg(\Delta)$.

\item For any place $v$ of $F$, denote by $F_v$ the completion of $F$
  at $v$. Put
  \[ K_v:=K\otimes F_v=\prod_{w|v} K_w, \quad A_v:=A\otimes F_v, \quad
  \Delta_v=\Delta\otimes F_v=\Mat_{s_v}(D_v), \]
  where $D_v$ is the division part of the central simple algebra
  $\Delta_v$ and $s_v$ is the capacity of $\Delta_v$.

\item $k_w:=[K_w:F_v]$ and $d_v:=\deg(D_v)$,
  where $w$ is a place of $K$ over $v$.

\item $\Delta\otimes_F K=\Mat_c(\Delta')$ and $\delta':=\deg(\Delta')$,
  where $\Delta'$ is the
  division part of the central simple algebra $\Delta\otimes K$ over
  $K$, and $c$ is its capacity. One has
\begin{equation}
    \label{eq:48}
   \delta=\delta' c.
\end{equation}
%\[  \]

\item For any place $w$ of $K$, put
\[ \Delta'_w:=\Delta'\otimes_K K_w=\Mat_{t_w}(D'_w), \quad
d'_w:=\deg(D'_w), \]
where $D'_w$ is the division part of the central simple algebra
$\Delta'_w$ and $t_w$ is the local capacity of $\Delta'$ at $w$.

\item $c_w:=\c(D_v\otimes_{F_v} K_w)$, i.e. $D_v\otimes_{F_v}
  K_w=\Mat_{c_w}(D'_w)$. One has
  \begin{equation}
    \label{eq:49}
    d_v=d_w' c_w.
  \end{equation}
It follows from $\inv(D'_w)=\inv(D_v)[K_w:F_v]$ (see
  \cite{serre:lf}) that
  \begin{equation}
    \label{eq:413}
    c_w=(d_v,k_w).
  \end{equation}

\item For each place $v$ of $F$, write
\[ \inv_v(\Delta)=\frac{a_v}{\delta}=\frac{ a'_v\, s_v}{d_v\,
  s_v}=\frac{a'_v}{d_v},\quad (a'_v,d_v)=1
  \text{\ and\ } s_v=(a_v,\delta). \]
  One has, by the Grunwald-Wang theorem
\begin{equation}\label{eq:411}
  \delta=\lcm \{d_v\}_{v\in V^F} \quad \text{and}\quad
  \left ( \gcd\{a_v\}_{v\in V^F},\delta \right)=1,
\end{equation}
where $V^F$ denotes the set of all places of $F$.
\item For each place $w$ of $K$, write
\[ \inv_w(\Delta')=\frac{b_w}{\delta'}=\frac{{b'}_w\, t_w}{d'_w\,
  t_w}=\frac{{b'}_w}{d'_w},\quad ({b'}_w,d'_w)=1
  \text{\ and\ } t_w=(b_w,\delta'). \]
  One has
\begin{equation}\label{eq:412}
  \delta'=\lcm \{d'_w\}_{w\in V^K} \quad \text{and}\quad
  \left ( \gcd\{b_w\}_{w\in V^K},\delta' \right)=1,
\end{equation}
where $V^K$ denotes the set of all places of $K$.
\end{itemize} \

Given $K$ and $A$, we have, for each place $v$ of $F$,
\begin{itemize}
\item a tuple
$(k_w)_{w|v}$ of positive integers,  and
\item a rational number $\inv_v(\Delta)=a'_v/d_v$
\end{itemize}
satisfying the following conditions:
\begin{itemize}
\item [(a)] $\sum_{w|v} k_w=k$ for all $v\in V^F$,
\item [(b)]
  \begin{itemize}
  \item [(i)] $d_v=1$ if $v$ is a complex place,
  \item [(ii)]$d_v\in\{1,2\}$ if $v$
  is a real place,
  \item [(iii)] $d_v=1$ for almost all $v$, and
  \item [(iv)] (Global class field theory) one has
\[  \sum_{v\in V^F} \frac{{a'}_v}{d_v}=0. \]
  \end{itemize} 
\end{itemize}

We compute all other numerical invariants $\delta$, $c_w$, $d'_w$,
$\delta'$ and $c$ as follows.
\begin{itemize}
\item [(i)] The (global) degree $\delta$ of $\Delta$ can be
computed by (\ref{eq:411}).

\item [(ii)] Then one computes the local capacity $c_w$
of $D_v\otimes_{F_v} K_w$ and
the (local) degree $d'_w$ of $D'_w$ by
$(\ref{eq:413})$ and $(\ref{eq:49})$, respectively.

\item [(iii)]  Using (\ref{eq:412}) we compute
the (global) degree $\delta'$ of $\Delta'$ and then
compute the (global) capacity $c$ of $\Delta\otimes K$ using (\ref{eq:48}).
\end{itemize} \

Theorem~\ref{29} states that $\Hom_{F}(K,A)\neq \empty$ if and only if
$k\mid nc$. This provides a simple numerical criterion  
how to check when there is an embedding $K\hookrightarrow A$.\\

Now we analyze the obstruction to the corresponding local-global
principle. Similar to (\ref{eq:36}) we define for each $v\in V^F$ a
finite set
\begin{equation}
  \label{eq:414}
\calE_v:=\calE_{F_v}(K_v,A_v)=\{(x_w)_{w|v}\,|\, x_w\in \bbN,\
\sum_{w|v} \ell_w x_w=n s_v \, \},
\end{equation}
where $\ell_w:=k_w/c_w$.

Theorem~\ref{33} (also with Theorem~\ref{27} (2)) 
states that there is an isomorphism
\[ e_v: A_v^\times \backslash \Hom_{F_v}^*(K_v,A_v)\isoto \calE_v. \]
If $\varphi_v\in \Hom_{F_v}^*(K_v,A_v)$, then it gives to a
decomposition of {\it non-zero} $D_v$-submodules $V_w$
\[ D_v^{ns_v}=\bigoplus_{w|v} V_w, \quad \dim_{D_v} V_w=n_w \]
with the property $k_w|n_w c_w$ or equivalently $\ell_w|n_w$. Then 
$e_v([\varphi_v])$ is given by the formula
\begin{equation}
    \label{eq:418}
  e_v([\varphi_v])=\left ( \frac{n_w}{\ell_w} \right )_{w|v}.  
\end{equation}
Let us suppose first that the set $\Hom_F(K, A)$ of embeddings from
$K$ into $A$ over $F$ is {\it non-empty}. 
For any element $\varphi$ in $\Hom_F(K,A)$, 
let $\varphi_v\in \Hom_{F_v}(K_v,A_v)$ be the extension 
of $\varphi$ by $F_v$-linearity, 
and let $[\varphi_v]$ be its equivalence class. 
Then one gets an element $\bfx_v\in \calE_v$ by
\[ \bfx_v:=e_v([\varphi_v])=(\bfx_w)_{w|v}. \]
Simple computation shows that
\begin{equation}
  \label{eq:415}
  \bfx_w:=\dim_{D_v} W_w/\ell_w=n s_v c_w/k,
\end{equation}
which is a positive integer.

Without knowing $\Hom_F(K,A)$ is non-empty, we still define an element
$\bfx_w\in \Q$ for each $w\in V^K$ by (\ref{eq:415}). Let $\bar
\bfx_w$ be the image of $\bfx_w$ in $\Q/\Z$. We associate to the
pair $(K,A)$ an element 
\[ \bar \bfx:=(\bar \bfx_w)_{w\in V^K}\in \bigoplus_{w\in V^K}
\Q/\Z. \]
The vanishing of the class $\bar \bfx$ is an obstruction for the set
$\Hom_F(K,A)$ be non-empty. Moreover, the following result
states that this is the only obstruction. 
\begin{thm}\label{46} Notations as above. We have
\[ \Hom^*_F(K,A)\neq \emptyset \iff \bar \bfx=0. \]
\end{thm}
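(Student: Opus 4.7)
The plan is to identify the vanishing of $\bar\bfx$ with the single global divisibility $k\mid nc$ from Theorem~\ref{29}. Applied to $A=\Mat_n(\Delta)$ with the simple factor $K$, Theorem~\ref{29} says that $\Hom^*_F(K,A)\neq\emptyset$ if and only if $k\mid nc$, where $c=\c(\Delta\otimes_F K)$. So it suffices to prove $\bar\bfx=0 \iff k\mid nc$.

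For the forward implication, assume $\varphi\in\Hom^*_F(K,A)$. For each place $v$ of $F$, the base change $\varphi_v\in\Hom^*_{F_v}(K_v,A_v)$ corresponds under the local form of Theorem~\ref{33} to an element $(n_w/\ell_w)_{w\mid v}\in\calE_v$ with $n_w\in\Z_{>0}$ and $\ell_w\mid n_w$. Because $\varphi$ is global, the decomposition of the natural $A_v$-module along the induced $K_v$-action is uniform; a short bookkeeping with the Wedderburn decompositions $\Delta_v=\Mat_{s_v}(D_v)$ and $\Delta\otimes_F K=\Mat_c(\Delta')$ (using the isotypic $\Mat_c(\Delta')$-structure on $V$ of multiplicity $nc/k$) yields $n_w=ns_vk_w/k$. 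Hence
\[ n_w/\ell_w \;=\; n_wc_w/k_w \;=\; ns_vc_w/k \;=\; \bfx_w \in \Z_{>0}, \]
so $\bar\bfx_w=0$ in $\Q/\Z$ for every $w\in V^K$, i.e., $\bar\bfx=0$.

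For the backward implication, assume $\bar\bfx=0$; we show $k\mid nc$ prime by prime. Fix a rational prime $p$ and write $v_p$ for the $p$-adic valuation. The hypothesis $\bfx_w\in\Z$ reads $v_p(k)\le v_p(n)+v_p(s_v)+v_p(c_w)$ for every $w\in V^K$. Using $\delta=s_vd_v$ and \eqref{eq:49} ($d_v=c_wd'_w$), one has $v_p(s_v)+v_p(c_w)=v_p(\delta)-v_p(d'_w)$, so
\[ v_p(d'_w) \;\le\; v_p(n)+v_p(\delta)-v_p(k), \qquad \forall\, w\in V^K. \]
Taking the supremum over $w$ and using $\delta'=\lcm_w d'_w$ from \eqref{eq:412} together with $\delta=c\delta'$ from \eqref{eq:48}, we obtain $v_p(k)\le v_p(n)+v_p(\delta)-v_p(\delta')=v_p(n)+v_p(c)$. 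Since $p$ was arbitrary, $k\mid nc$, and Theorem~\ref{29} produces the required global embedding.

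The main delicacy is not the valuation chase but the forward-direction identification $n_w=ns_vk_w/k$, which reflects the global uniformity of the $\Delta\otimes_F K$-module structure on $V$. The backward direction becomes almost automatic once the Grunwald--Wang relations \eqref{eq:411}, \eqref{eq:412} are in hand: they force the local invariants $d_v,d'_w$ to recover $\delta,\delta'$ as maxima, so that the family of place-by-place inequalities collapses to the single global divisibility. In this light, the class $\bar\bfx$ in $\bigoplus_{w\in V^K}\Q/\Z$ is exactly the prime-by-prime packaging of the obstruction $k\nmid nc$.
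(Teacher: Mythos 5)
Your argument is correct, and in fact it supplies something the paper does not: the paper's ``proof'' of Theorem~\ref{46} is a one-line citation to \cite[Theorem~3.6]{shih-yang-yu}, so there is no in-text argument to compare against. Your reduction of the theorem to the equivalence $\bar\bfx=0 \iff k\mid nc$, combined with Theorem~\ref{29}, is a clean and self-contained route to the result.

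A few remarks on the two halves. Your backward direction is exactly the right valuation chase: from $\bfx_w=ns_vc_w/k\in\Z$ one gets $v_p(d'_w)\le v_p(n\delta/k)$, and the identity $\delta'=\lcm_w d'_w$ (which is the Brauer--Hasse--Noether/Grunwald--Wang input, eq.~\eqref{eq:412}) converts this family of local bounds into $v_p(\delta')\le v_p(n\delta/k)$, i.e.\ $v_p(k)\le v_p(nc)$. The forward direction, however, is heavier than it needs to be. You pass through the module-theoretic bookkeeping ($V\cong W^{\oplus c}$ as a right $\Mat_c(\Delta')$-module of multiplicity $nc/k$, localize, Morita-reduce to get $n_w=ns_vk_w/k$) to conclude $\bfx_w=n_w/\ell_w\in\Z$. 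That computation is correct --- I checked that $\dim_{D_v}(U_w)=ns_vk_w/k$ after the Morita reduction, matching the paper's eq.~\eqref{eq:415} --- but it is unnecessary: since $s_vc_w=\delta/d'_w$ and $d'_w\mid\delta'$, one has directly
\[
\bfx_w=\frac{ns_vc_w}{k}=\frac{n\delta}{k\,d'_w}=\frac{nc}{k}\cdot\frac{\delta'}{d'_w},
\]
so $k\mid nc$ forces $\bfx_w\in\Z$ with no appeal to the global module structure at all. This makes both directions of $\bar\bfx=0\iff k\mid nc$ pure divisibility statements, mirror images of each other through $\delta'=\lcm_w d'_w$, which is aesthetically tighter. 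That said, the module-theoretic version you give does have the merit of explaining \emph{why} the paper's formula \eqref{eq:415} for $\bfx_w$ is the right one, i.e.\ that it really computes $e_v([\varphi_v])$ for a globally-induced $\varphi_v$, which the paper glosses as ``simple computation.'' So your longer route is not wasted; it just proves slightly more than the theorem requires.

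One last small point of hygiene: you open the forward direction with ``assume $\varphi\in\Hom^*_F(K,A)$'' even though the stated goal at that moment is $k\mid nc\Rightarrow\bar\bfx=0$. The two are equivalent by Theorem~\ref{29}, so there is no logical gap, but stating the implication you are actually proving would avoid a momentary appearance of circularity.
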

\begin{proof}
  See \cite[Theorem 3.6]{shih-yang-yu}. \qed
\end{proof}

\subsection{Construction of examples}
\label{sec:44}

For a central simple algebra $A$ over a global field $F$ and a finite
field extension $K$ over $F$, we say that the Hasse principle  
for the pair $(K,A)$ holds if one has the equivalence of the conditions
\[ \Hom^*_F(K,A)\neq \emptyset \iff \Hom^*_{F_v}(K_v,A_v)\neq
\emptyset, \quad \forall\, v\in V^F. \]
In this subsection we shall construct a family of examples $(K,A)$ 
so that the Hasse principle for $(K,A)$ fails.
% Keep the notation of \S~\ref{sec:42}. 

\begin{lemma}
  Let $S$ be a finite set of places of
  a global field $F$. Let $L_v$, for each $v\in S$, be any etale
  $F_v$-algebra of the same degree
  $[L_v:F_v]=d$. Then there exists a finite
  separable field extension $K$ of $F$ of degree $d$ such that
  $K\otimes_F F_v\simeq L_v$ 
  for all $v\in S$.  
\end{lemma}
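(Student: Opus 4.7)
The plan is to realize each local étale algebra as $F_v[x]/(f_v)$ for a separable monic polynomial $f_v \in F_v[x]$ of degree $d$, then glue these together globally using weak approximation and Krasner's lemma, adding one auxiliary place to force irreducibility.

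First I would use the primitive element theorem for étale algebras over the infinite local field $F_v$: writing $L_v \simeq \prod_i L_{v,i}$ as a product of separable field extensions, pick generators $\alpha_{v,i}$ of $L_{v,i}/F_v$ whose minimal polynomials are pairwise distinct (shifting by distinct constants from $F_v$ if necessary; $F_v$ is infinite). Then $\alpha_v := (\alpha_{v,i})_i$ is a primitive element for $L_v$, its minimal polynomial $f_v \in F_v[x]$ is separable monic of degree $d$, and $L_v \simeq F_v[x]/(f_v)$ by CRT.

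Next, to ensure a global polynomial will be irreducible over $F$, pick an auxiliary place $v_0$ of $F$ outside $S$ (there are infinitely many finite places), and let $f_{v_0} \in F_{v_0}[x]$ be an irreducible separable monic polynomial of degree $d$, for example one defining the unramified extension of $F_{v_0}$ of degree $d$. By weak approximation applied to each coefficient, there is a monic polynomial $f \in F[x]$ of degree $d$ whose coefficients are as $v$-adically close to those of $f_v$ as we like, for every $v \in S \cup \{v_0\}$.

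Finally, Krasner's lemma (applied to each $v \in S \cup \{v_0\}$ using the separability of $f_v$, which guarantees a positive lower bound on root separations) shows that if the approximation is close enough, then $F_v[x]/(f) \simeq F_v[x]/(f_v)$ as $F_v$-algebras. Thus $F_{v_0}[x]/(f) \simeq F_{v_0}[x]/(f_{v_0})$ is a field, so $f$ is irreducible over $F_{v_0}$ and a fortiori over $F$; and $F_v[x]/(f) \simeq L_v$ for all $v \in S$. Setting $K := F[x]/(f)$ gives the desired separable field extension of $F$ of degree $d$, since separability is inherited from the reduction at $v_0$ (equivalently, $f$ is separable because its reduction at $v_0$ is). The main subtlety to be careful about is the precise form of Krasner's lemma for étale algebras, which requires the initial $f_v$'s to be separable so that a uniform approximation radius exists.
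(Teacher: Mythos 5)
Your proof is correct, and it takes a genuinely different route from the paper's. The paper's own proof is a one-line citation to the Hilbert irreducibility theorem combined with Krasner's lemma (referring to Lemma 3.2 of the author's earlier note on Galois covers of curves). You instead replace the Hilbert irreducibility theorem by the more elementary device of weak approximation together with an auxiliary place $v_0 \notin S$, at which you prescribe an irreducible separable local model of degree $d$ (e.g.\ the unramified extension of $F_{v_0}$). This auxiliary-place trick is a standard and arguably lighter alternative: it needs only weak approximation for the global field $F$ and one extra local condition, rather than the deeper Hilbert irreducibility theorem. Both arguments then finish identically, using Krasner's lemma to transfer the prescribed local behaviour to the global approximating polynomial $f$ and to read off irreducibility of $f$ over $F$ from irreducibility over $F_{v_0}$. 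Two small points are worth flagging. If $F$ is a number field and $S$ contains archimedean places, Krasner's lemma (which is stated for complete nonarchimedean fields) should be replaced at those places by continuity of roots---simple real roots of $f_v$ remain real and non-real roots remain in conjugate pairs under a small real perturbation---which yields the same conclusion $F_v[x]/(f)\simeq F_v[x]/(f_v)$. And your remark that separability of $K/F$ is ``inherited from the reduction at $v_0$'' is correct but is more transparently stated as: $f$ is separable over $F_{v_0}$, and $\gcd(f,f')$ is unchanged under field extension, so $f$ is already separable over $F$.
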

\begin{proof}
This follows from the Hilbert irreducibility theorem and Krasner's
lemma; also see a proof in \cite[Lemma 3.2]{yu:const}. \qed  
\end{proof}

Let $K$ be any finite separable field extension of $F$ of degree
$k=[K:F]>1$, and let $\delta$ be a positive integer with more than one
prime divisors and divisible by $k$. Write 
$\delta=p_1^{n_1} \dots p_r^{n_r}$, 
$r\ge 2$ and $n_i\ge 1$, where each $p_i$ is a prime number. 

Assume that $k\le \delta/p_i^{n_i}$ for all $i$. We claim that there is
a central division algebra $\Delta$ over $F$ of degree $\delta$ so
that the Hasse principle for the pair $(K,\Delta)$ fails. 

Choose $2r$ places $v_1, v_1',\dots, v_r, v_r'$ of $F$ which split
completely in $K$. One has
\begin{equation}
  \label{eq:419}
  K_{v_i}=\prod_{j=1}^k K_{w_{ij}}, \quad K_{w_{ij}}\simeq F_{v_i}, \quad
  \text{ and }\quad K_{v'_i}=\prod_{j=1}^k 
  K_{w'_{ij}}, \quad K_{w'_{ij}}\simeq F_{v'_i}, \quad \forall\, i. 
\end{equation}
Choose a central division algebra $\Delta$ over $F$ with following
local invariants:
\begin{itemize}
\item $\inv_{v_i}(\Delta)=-\inv_{v'_i}(\Delta)=1/p_i^{n_i}$ for
  $i=1,\dots, r$, and
\item $\inv_v(\Delta)=0$ for other places $v$.
\end{itemize}
Then $\Delta$ has degree $\delta$. For all $i$, we have
\begin{equation}
  \label{eq:420}
  s_{v_i}=s_{v_i'}=\delta/p_i^{n_i}, \quad c_{w_{ij}}=1, \quad 
  \bfx_{w_{ij}}=s_{v_i}/k,
  \quad \ell_{w_{ij}}=1, 
\end{equation}
and 
\[ \calE_{v_i}\simeq \calE_{v_i'}=\left \{(x_j)\in \bbN^k\,  ; \,
\sum_{j=1}^k x_j=s_{v_i} \right \}. \]
Since $k\le s_{v_i}$, the sets $\calE_{v_i}$ and $\calE_{v'_i}$ are
non-empty; the remaining sets $\calE_v$ for unramified places $v$ are
also non-empty due to $k\,|\,\delta$. 

Suppose that the Hasse principle for $(K,\Delta)$ holds, then one has 
$\bfx_{w_{ij}}\in \bbN$ for all $i$ by Theorem~\ref{46}. This implies
that $k=1$ as $\gcd
\{s_{v_i}\}_i=1$,  a contradiction.

\section*{Acknowledgments}
% \begin{thank}
  Part of this work was done during the author's stays at Tsinghua
  University in Beijing and at Universit\"at Duisburg-Essen. 
  He wishes to thank Linsheng Yin and U.~G\"ortz for
  their kind invitation and hospitality. 
  The author was partially supported by grants NSC
  97-2115-M-001-015-MY3 and AS-99-CDA-M01.

\end{document}